\documentclass[aihp]{imsart}

\RequirePackage{amsthm,amsmath,amsfonts,amssymb}
\RequirePackage[numbers]{natbib}
\RequirePackage{graphicx}%% uncomment this for including figures
\RequirePackage{xcolor}

\startlocaldefs
\numberwithin{equation}{section}
\theoremstyle{plain}
\newtheorem{theorem}{Theorem}[section]
\newtheorem{lemma}{Lemma}[section]
\newtheorem{proposition}{Proposition}[section]
\theoremstyle{definition}
\newtheorem{definition}{Definition}[section]
\newtheorem{prp}{Property}[section]
\endlocaldefs

\begin{document}

\begin{frontmatter}

%%%%%%%%%%%%%%%%%%%%%%%%%%%%%%%%%%%%%%%%%%%%%%
%%                                          %%
%% Enter the title of your article here     %%
%%                                          %%
%%%%%%%%%%%%%%%%%%%%%%%%%%%%%%%%%%%%%%%%%%%%%%
\title{On the Asymptotic of Coupled Spherical Integrals: An Operator-Valued Extension of the Free Fourier Transform}
%\title{A sample article title with some additional note\thanksref{T1}}
\runtitle{On the Asymptotic of Coupled Spherical Integrals}
%\thankstext{T1}{A sample of additional note to the title.}

\begin{aug}
%%%%%%%%%%%%%%%%%%%%%%%%%%%%%%%%%%%%%%%%%%%%%%%
%% Additional information such as            %%
%% identifying the corresponding author must %%
%% be included in in the Acknowledgments     %%
%% section if necessary.                     %%
%% ORCID can be inserted by command:         %%
%% \orcid{0000-0000-0000-0000}               %%
%%%%%%%%%%%%%%%%%%%%%%%%%%%%%%%%%%%%%%%%%%%%%%%
\author[A]{\inits{L. P. B.}\fnms{Levi-Pascal V. G.}~\snm{Bohnacker} \orcid{0009-0004-9358-6625} \ead[label=e1]{levi.bohnacker@fau.de}}
\author[A]{\inits{R. R. M.}\fnms{Ralf R.}~\snm{M\"uller} \orcid{0000-0003-3780-9308} \ead[label=e2]{ralf.r.mueller@fau.de}}
%%%%%%%%%%%%%%%%%%%%%%%%%%%%%%%%%%%%%%%%%%%%%%
%% Addresses                                %%
%%%%%%%%%%%%%%%%%%%%%%%%%%%%%%%%%%%%%%%%%%%%%%
\address[A]{FAU Erlangen-Nuremberg, Institute for Digital Communications, Cauerstraße 7, D-91058 Erlangen, Germany\printead[presep={,\ }]{e1,e2}}

\end{aug}

\begin{abstract}
In this paper the asymptotic exponent of coupled Harish-Chandra-Itzykson-Zuber (HCIZ) integrals is studied.
We consider $L$ rank-one HCIZ integrals which are coupled through additive exponent terms, with $L>1$ finite.
We show that the normalized asymptotic concentration exponent in the rank-one case is, under some constraints, characterized by the operator-valued R-transform.
In this way the coupled HCIZ integral provides the operator-valued free analog of the classical Laplace transform.
The result can be used to recover the asymptotics of rank-one non-self-adjoint spherical integrals, if the constant matrix is R-diagonal in the sense of free probability.
Furthermore, we show that $L$ coupled HCIZ integrals of rank $M(N)=O\left(N^{1/2-\epsilon}\right)$ can be factorized into $M(N)$ rank-one $L$-fold coupled spherical integrals, and characterize the operator-valued R-transform for R-cyclic matrices on the algebra of real, diagonal $L\times L$ matrices.
The treatment of coupled spin glasses with correlated disorder is enabled by our asymptotic exponent.
\end{abstract}

\begin{abstract}[language=french]
% Dans cet article, nous étudions l’exposant asymptotique des intégrales couplées de Harish-Chandra-Itzykson-Zuber (HCIZ).
% Nous considérons $L$ intégrales HCIZ de rang un, couplées par des termes d’exposant additifs, avec $L > 1$ fini.
% Nous montrons que l’exposant de concentration asymptotique normalisé dans le cas de rang un est, sous certaines contraintes, caractérisé par la transformée R à valeurs opératoires.
% De cette manière, l’intégrale HCIZ couplée fournit l’analogue libre à valeurs opératoires de la transformée de Laplace classique.
% Ce résultat peut être utilisé pour déterminer l’asymptotique des intégrales sphériques de rang un non auto-adjointes, si la matrice constante est R-diagonale au sens de la probabilité libre.
% De plus, nous montrons que $L$ intégrales HCIZ couplées de rang $M(N)=O\left(N^{1/2-\epsilon}\right)$ peuvent être factorisées en $M(N)$ intégrales sphériques couplées $L$-fois de rang un, et nous caractérisons la transformée R à valeurs opératoires pour les matrices R-cycliques sur l’algèbre des matrices réelles diagonales $L\times L$.
% Notre exposant asymptotique permet d’étudier les verres de spin couplés présentant un désordre corrélé.
\end{abstract}

\begin{keyword}[class=MSC]
\kwdgroup[type=primary]{\kwd{60B20}
\kwd{46L54}}
\kwdgroup[type=secondary]{\kwd{60F10}}
\end{keyword}

\begin{keyword}
\kwd{spherical integral}
\kwd{R-transform}
\kwd{free probability theory}
\kwd{operator-valued}
\end{keyword}

\end{frontmatter}

%%%%%%%%%%%%%%%%%%%%%%%%%%%%%%%%%%%%%%%%%%%%%%
%%%% Main text entry area:
\section{Introduction}\label{1_intro}
The spherical integral introduced by Harish-Chandra \cite{harish-chandraDifferentialOperatorsSemisimple1957} and independently solved by Itzykson and Zuber \cite{itzyksonPlanarApproximationII1980} is defined as the matrix integral
\begin{align}
    I_N^{(\beta)}(D_N,E_N) 
    = 
    \int 
        \exp\left\{
            N
            \mathrm{tr}\left(
                D_N 
                U^\dagger
                E_N 
                U
            \right)
        \right\}
        \mathrm d m_N^{(\beta)}(U),
    \label{eq:HCIZ}
\end{align}
where $D_N$, $E_N\in\mathbb A^{N\times N}$ are self-adjoint with $\mathbb A=\mathbb R$ for $\beta=1$ and $\mathbb A=\mathbb C$ for $\beta=2$, $U$ is in the orthogonal group $\mathcal O_N$ or the unitary group $\mathcal U_N$ for $\beta=1$ and $\beta=2$, respectively, $m_N^{(\beta)}$ denotes the corresponding Haar measure, $\mathrm{tr}(X)$ is the trace of matrix $X$, and $X^\dagger$ denotes the conjugate transpose of matrix $X$.
The case $\beta=4$ is not considered in this manuscript.
Integral \eqref{eq:HCIZ} is also referred to as the HCIZ integral or the spherical integral, the asymptotics of which have been extensively studied in the mathematics and physics communities \cite{guionnetFourierViewTransform2005,collinsNewScalingItzykson2007,tanakaAsymptoticsHarishChandraItzyksonZuberIntegrals2008,marinariReplicaFieldTheory1994}.
Applications of large system analysis in research fields such as Communications Engineering \cite{bereyhiStatisticalMechanicsMAP2019,mullerVectorPrecodingWireless2008,tulinoSupportRecoverySparsely2013} and Artificial Intelligence \cite{maillardBayesoptimalLearningExtensivewidth2024,pourkamaliBayesianExtensiveRankMatrix} have further fueled the interest in the HCIZ and related integrals.
If matrix $D_N$ is rank-one with a single non-zero eigenvalue $\theta$, the following asymptotic obtained by Guionnet and Ma\"ida \cite[Theorem 2]{guionnetFourierViewTransform2005} holds
\begin{align}
    \lim_{N\uparrow\infty} 
    \frac{1}{N}
    \log 
    I_N^{(\beta)}(D_N,E_N) 
    =
    I_{\mu_E}^{\mathrm{GM}}(\theta)
    =
    \theta 
    \int_{0}^1
    \mathrm R_{\mu_E}\left(\frac{2}{\beta}\theta w\right) d w,
    \label{eq:rankOneGMasymptotic}
\end{align}
assuming that the empirical spectral measure of $E_N$ weakly converges to $\mu_E$, the spectral measure of the non-commutative random variable (RV) $E$, as $N\uparrow\infty$ with finitely bounded support and for small enough $\theta$.
Furthermore, $R_{\mu_E}(\cdot)$ is the R-transform of $\mu_E$ which is given as a power series of the free cumulants of $\mu_E$, i.e. $\kappa_n(E)$ for all $n\geq1$ \cite{mingoFreeProbabilityRandom2017}.
In classical probability theory, the cumulants of an RV $X$ are defined through the logarithm of its moment generating function $\mathbb E[e^{tX}]$, where $\mathbb E[\cdot]$ denotes expectation.
The moment generating function is essentially the Laplace/Fourier transform of the probability density function $f_X$ of $X$.
Similarly, the logarithm of the rank-one HCIZ integral returns a power series of the free cumulants of $E=\lim_{N\uparrow\infty}E_N$. 
Hence, $I_N^{(\beta)}(D_N, E_N)$ is interpreted as the free counterpart to the Laplace transform.
\\
If $D_N$ has rank $M(N)=O\left(N^{1/2-\epsilon}\right)$ for any $\epsilon>0$, \eqref{eq:HCIZ} can be asymptotically factorized into $M(N)$ rank-one HCIZ integrals as shown in \cite[Theorem 7]{guionnetFourierViewTransform2005} as
\begin{align}
\lim_{N\uparrow\infty}
\frac{1}{NM(N)}
\log I_N^{(\beta)}(D_N, E_N)
=
\lim_{N\uparrow\infty}\frac{1}{M(N)} \sum_{i=1}^{M(N)}I_{\mu_E}^{\mathrm{GM}}(\theta_i),
\label{eq:GM_result_extensive_rank}
\end{align}
where $\theta_i$ is the $i^\mathrm{th}$ non-zero eigenvalue of $D_N$.
These results are often applied to average over large random matrices, e.g. in the application of the replica method \cite{mezardSpinGlassTheory1986} for large system analysis \cite{bereyhiStatisticalMechanicsMAP2019}.
In this context, \eqref{eq:GM_result_extensive_rank} is also known as the \textit{Free Fourier Transform}.
Define $A_N=U^\dagger E_N U$, then $A_N$ is unitarily/orthogonally invariant, and the HCIZ integral can be interpreted as an averaging operation over the eigenspace of $A_N$ and the independent corresponding eigenvalue distribution.
This interpretation is extended in \cite{tanakaAsymptoticsHarishChandraItzyksonZuberIntegrals2008} to any $A_N$ (not necessarily unitarily/orthogonally invariant) under the assumption that $D_N$ is positive definite.
\\
In this paper, we analyze matrix integrals which can be expressed by a finite number $L$ of \textit{coupled} HCIZ integrals.
Specifically, we study the asymptotic exponent of the following integrals over the unitary/orthogonal group
\begin{align}
    &I_N^{(\beta)}(\{D_l\},\{E_{lk}\})
    =
    \int 
        \exp\left\{
            \sum_{l,k=1}^{L}
                N\mathrm{tr}\left(
                    D_kD_l^\dagger  U_l^\dagger E_{lk} U_k
                \right)
        \right\}
        \prod_{l=1}^{L} 
            \mathrm d m_N^{(\beta)}(U_l),
    \label{eq:L_coupled_spherical_integrals}
\end{align}
where $U_l,D_l,E_{lk}\in\mathbb A^{N\times N}$, $E_{lk}=E_{kl}^\dagger$, and for symbol $X$, $\{X_i\}$ denotes the indexed family $\{X_1,X_2,\ldots\}$.
Furthermore, all $D_k$ and $D_l$ have $M(N)$ non-zero singular values and share the right orthogonal matrix of their respective singular value decompositions.
To lighten the notation we have dropped the subscript $N$ from all matrices.
\\
The motivation to study matrix integrals of the form in \eqref{eq:L_coupled_spherical_integrals} arises from an application of the replica method to analyze Nearest Convex Hull Classification, which is introduced in \cite{nalbantovNearestConvexHull2006}.
To approximate the error rate, the covariance between two spin glasses with shared randomness is computed by treating the two thermodynamic systems in a coupled manner, as discussed by the authors in \cite{bohnackerGeometricAnalysisBlind2026}.
Averaging over the quenched, shared disorder is enabled by a modification of \eqref{eq:L_coupled_spherical_integrals}.
\\
In \cite{collinsAsymptoticsUnitaryOrthogonal2009}, matrix integrals are studied in a general framework which encompasses \eqref{eq:L_coupled_spherical_integrals}.
It is shown that the asymptotic exponent scaled by $1/N^2$ exists in the high-temperature regime, i.e. for small enough singular values of $D_l$.
Furthermore, a combinatorial model to solve such matrix integrals is developed and applied to the HCIZ integral, reproducing the results of \cite{guionnetFourierViewTransform2005}.
Applying this combinatorial framework to our result is subject to future work.
\\
Our main contributions are:
\begin{itemize}
    \item Derivation of the asymptotic exponent of \eqref{eq:L_coupled_spherical_integrals} for finite $L$ and $M(N)=1$ as a function of the operator-valued R-transform.
    \item Proof that \eqref{eq:L_coupled_spherical_integrals} factorizes into $M(N)$ rank-one integrals if $M(N)= O\left(N^{1/2-\epsilon}\right)$ for any $\epsilon>0$ as $N\uparrow\infty$.
    \item Modification of \eqref{eq:L_coupled_spherical_integrals} and corresponding exponents, such that the inner matrices $E_{lk}$ may depend on the singular value index of $D_{l}$.
    \item Characterization of the operator-valued R-transform for R-cyclic matrices on the algebra of real, diagonal matrices.
\end{itemize}
Our proofs are  based on the large deviations techniques used in \cite{guionnetFourierViewTransform2005}; we cannot, however, assume that the matrices $E_{lk}$ are jointly diagonalizable which alters the derivation of the upper and lower bounds.
Furthermore, the Haar-distributed $U_l$ are coupled through $E_{lk}$, which results in a coupling of the Gaussian measures and naturally leads us to introduce the operator-valued R-transform.
\\
In Section~\ref{2_statement_of_main_results} we present our main results, followed by some preliminaries on the operator-valued Stieltjes and R-transforms in Section~\ref{3_preliminaries}.
The corresponding proofs are given in Section~\ref{4_proofs}.

\section{Statement of the Main Results}\label{2_statement_of_main_results}
In the following we show that spherical integrals of the form \eqref{eq:L_coupled_spherical_integrals} exhibit an asymptotic exponent, similar to \eqref{eq:rankOneGMasymptotic} in an operator-valued free probability setting \cite{mingoFreeProbabilityRandom2017}.
Let us first define what we mean by convergence of empirical operator-valued moments.
\begin{definition}
    Let $\left(\mathcal A, \mathcal E, \mathcal B_L\right)$ be an operator-valued probability space as defined in \cite[Chapter 9]{speicherFreeProbabilityTheory2009}, where $\mathcal A$ is the algebra of $L\times L$ block matrices composed of $N\times N$ blocks as $N$ tends to infinity and $\mathcal B_L$ is the algebra of all $L\times L$ matrices.
    Furthermore, the expectation operation $\mathcal E:\mathcal A\to\mathcal B_L$ applies the averaged normalized trace $\varphi$ to all $L^2$ blocks of its argument.
    Define matrix $Z_N$ as
    \begin{align}
        Z_N
        =
        \begin{bmatrix}
            E_{11} & \ldots & E_{1L} \\
            \vdots & \ddots & \vdots \\
            E_{L1} & \ldots & E_{LL}
        \end{bmatrix},
    \end{align}
    with blocks $E_{lk}\in\mathbb A^{N\times N}~\forall l,k$ where $N$ is finite.
    $Z_N$ is said to converge to $Z$ if all empirical operator-valued moments of $Z_N$ converge to the corresponding operator-valued moments of $Z$, defined on the operator-valued probability space $(\mathcal A, \mathcal E, \mathcal B_L)$, as $N\uparrow\infty$.
    That is
    \begin{align}
        \lim_{N\uparrow\infty}
        \mathrm{Tr}_L  [Z_N B_1 Z_N \ldots B_{n-1} Z_N]
        =
        \mathcal E[Z B_1 Z \ldots B_{n-1} Z]
        ,
        \label{eq:operator_valued_moments}
    \end{align}
    for all $B_i\in\mathcal B_L$ and all $n\geq1$. 
    Note that we write with some abuse of notation, $Z_N B_i := Z_N (B_i\otimes \mathbb I_N)$.
    $\mathrm{Tr}_L$ is the normalized partial trace of order $L$, i.e. $\mathrm{Tr}_L[Z_N]$ returns an $L\times L$ matrix with entries $\mathrm{Tr}[E_{lk}]=\frac{1}{N}\mathrm{tr}\left(E_{lk}\right)$ at position $(l,k)$.
    \label{def:operator_valued_convergence}
\end{definition}
Let $\mathcal D_L$ be the algebra of real, diagonal, $L\times L$ matrices. 
If $Z_N$ is R-cyclic in the sense of operator-valued free probability, all moments as defined in \eqref{eq:operator_valued_moments} are diagonal for $B_i\in\mathcal{D}_L~\forall i$ \cite{mingoFreeProbabilityRandom2017}.
Therefore, convergence in the sense of Definition~\ref{def:operator_valued_convergence} implies convergence of $Z_N$ to $Z$ on the operator-valued probability space $(\mathcal A, \mathcal E_{\mathcal D}, \mathcal D_L)$ where $\mathcal E_{\mathcal D}: \mathcal A \to\mathcal D_L$.
\\
Denote the operator norm of matrix $X$ by $||X||_\infty$ which returns the maximum absolute eigenvalue if $X$ is self-adjoint, and the maximum singular value otherwise.
Furthermore, $[n]$ is the set $\{1,\ldots, n\}$, and let $\mathrm{diag}\{a_1,\ldots,a_n\}$ be the $n\times n$ diagonal matrix with on-diagonal elements $a_i~\forall i \in [n]$.
If all matrices $D_l$ have a single non-zero singular value, \cite[Theorem 2]{guionnetFourierViewTransform2005} is extended to the operator-valued case through:
\begin{theorem}[$L$-fold, coupled spherical integration for $\mathrm{rank}(D_l D_l^\dagger)=1$]
\label{thm:pL_rank1_integral}
Let $L=O(1)$, $M(N)=1$, and $D_l=O_l\Sigma_l V^\dagger$ be the singular-value decomposition of $D_l$ for all $l$, i.e. $D_l$ has a single non-zero singular value $\sqrt{\theta_l}$ at index $(1,1)$ of $\Sigma_l$, and the right unitary/orthogonal matrix $V^\dagger$ of the SVD of $D_l$ is identical for all $l$.
Define $Z_N$ as in Definition~\ref{def:operator_valued_convergence} with $E_{lk}\in\mathbb A^{N\times N}$, and $E_{lk}=E_{kl}^\dagger$.
If $Z_N$ converges to $Z$ where all moments of $Z$ are finite, $Z$ is R-cyclic, the spectral norm $||E_{lk}||_\infty<\infty$ is bounded for all $(l,k)$ as $N\uparrow\infty$, and all $\theta_l$ are small enough such that
\begin{align}
    -P=-\mathrm{diag}\left\{\frac{2\theta_1}{\beta},\ldots,\frac{2\theta_L}{\beta}\right\}
    \in \left\{\left.G^{\mathcal{D}_L}_Z(S) \right| S\in\mathcal S^Z_{\{-\}^L}\right\},
\end{align}
then
\begin{align}
    \lim_{N\uparrow\infty}
    \frac{1}{N}\log I_N^{(\beta)}(\{D_l\},\{E_{lk}\}) 
    = 
    \frac{\beta}{2}
    \int_{0}^1
        \mathrm{tr}\left(
            R_Z^{\mathcal{D}_L}(Pw)P
        \right)
        \mathrm d w
    = I_{Z}^{(\beta)}(\{\theta_l\}),
\end{align}
where $G_Z^{\mathcal D_L}$ and $R_Z^{\mathcal D_L}$ are the Stieltjes and R-transform of $Z$ on the operator-valued probability space $(\mathcal A, \mathcal E_{\mathcal D}, \mathcal D_L)$, respectively, and $\mathcal S^Z_{\{-\}^L}$ is the domain on which $G_{Z}^{\mathcal D_L}(S)$ exists on the real plane and is negative definite as defined in Properties~\ref{prop:2conditional_stieltjes_transform_on_real_plane} and \ref{prop:Lconditional_stieltjes_transform_on_real_plane}.
\end{theorem}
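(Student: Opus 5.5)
Our approach follows the large-deviation strategy of Guionnet and Ma\"ida \cite{guionnetFourierViewTransform2005}, adapted to a block structure. First we reduce the integral to vectors. Since $D_l = O_l\Sigma_l V^\dagger$ with a single non-zero singular value at index $(1,1)$, we have $D_kD_l^\dagger = \sqrt{\theta_k\theta_l}\,O_k e_1e_1^\dagger O_l^\dagger$. The right factor $V$ cancels, and $O_l$ is absorbed by the Haar invariance of $U_l$. Hence $\mathrm{tr}(D_kD_l^\dagger U_l^\dagger E_{lk}U_k)=\sqrt{\theta_k\theta_l}\,u_l^\dagger E_{lk}u_k$, where $u_l=U_le_1$ is uniform on the unit sphere of $\mathbb A^N$ and the $u_l$ are independent. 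Writing $u=(u_1,\dots,u_L)$ and $\Theta=\mathrm{diag}\{\sqrt{\theta_l}\}\otimes\mathbb I_N$, the integral becomes
\begin{align}
I_N=\mathbb E\left[\exp\left\{N\, u^\dagger \Theta Z_N \Theta u\right\}\right].
\end{align}
We represent $u_l=g_l/\|g_l\|$ with independent standard Gaussian vectors $g_l$ in $\mathbb A^N$, so that the norms $\|g_l\|^2/N\to1$ concentrate.

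Next we tilt the Gaussians. We introduce a diagonal Lagrange-type parameter $S=\mathrm{diag}\{s_l\}\in\mathcal D_L$ and consider the Gaussian measure with density proportional to
\begin{align}
\exp\left\{-\tfrac{\beta}{2}g^\dagger\big((S\otimes\mathbb I_N) - \Theta Z_N\Theta\big)g\right\}.
\end{align}
Its normalizing constant is $\det\big(S\otimes\mathbb I_N-\Theta Z_N\Theta\big)^{-\beta/2}$. Under this measure, $\frac1N\|g_l\|^2$ concentrates around the $l$-th diagonal entry of $\mathrm{Tr}_L[(S-\Theta Z_N\Theta)^{-1}]$. Choosing $S$ so that every block norm concentrates at $1$ gives the restriction from the sphere to the product of spheres at exponential cost $o(N)$. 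This yields the formula
\begin{align}
\frac1N\log I_N\approx \frac{\beta}{2}\left[\sum_l (s_l-1) - \frac1N\log\det\big(S-\Theta Z_N\Theta\big)\right],
\end{align}
with $S$ solving the saddle point equation $\mathrm{diag}\,\mathrm{Tr}_L[(S-\Theta Z_N\Theta)^{-1}]=\mathbb I_L$. Conjugating by $\mathrm{diag}\{\sqrt{\theta_l}\}$ and rescaling by $2/\beta$, this equation becomes $G^{\mathcal D_L}_{Z}(\tilde S)=-P$ up to sign conventions. R-cyclicity is used here: it makes the $\mathcal D_L$-valued Stieltjes transform diagonal and equal to the diagonal part of the $\mathcal B_L$-valued one. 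The hypothesis $-P\in G_Z^{\mathcal D_L}(\mathcal S^Z_{\{-\}^L})$ guarantees that a solution exists and that the tilted Gaussian is well defined, which requires $S-\Theta Z\Theta$ to be positive definite.

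The rigorous bounds come from two sides. For the upper bound, Chebyshev's inequality with the tilted measure gives the bound directly, and the spectral norm bound $\|E_{lk}\|_\infty<\infty$ controls the cross terms. For the lower bound, we restrict to the event that the block norms lie within $N^{-\delta}$ of $1$ and show this event has non-negligible probability under the tilted law, using a Gaussian concentration argument that is joint over blocks. Unlike the scalar case, we cannot diagonalize the $E_{lk}$ simultaneously, so we work with the quadratic form $g^\dagger Mg$ for the full $LN\times LN$ matrix $M$, whose variance is $O(N)$ and is bounded via $\|M\|_\infty$. Convergence of the operator-valued moments then replaces $\frac1N\log\det$ by its limit.

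Finally, we identify the limit with the integral formula. We differentiate the limiting free energy $F(P)$ in each $\theta_l$ along the ray $P\mapsto wP$. By the envelope theorem, $\partial_w F(wP)$ equals $\frac{\beta}{2}\mathrm{tr}\big((\tilde S(w)-G^{-1}\text{-term})P\big)$, and the operator-valued relation $G^{\mathcal D_L}_Z(R^{\mathcal D_L}_Z(B)+B^{-1})=B$, with $B=-wP$, turns this into $\frac{\beta}{2}\mathrm{tr}(R_Z^{\mathcal D_L}(Pw)P)$. Since $F(0)=0$, integrating over $w\in[0,1]$ gives the claim. The smallness of $\theta_l$ keeps the whole path $wP$ inside the admissible domain.

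The main obstacle is the lower bound. The Gaussian measures are coupled across the $L$ blocks, so we must show that the $L$ norm constraints are met simultaneously and that the tilting parameter can be chosen uniformly in $N$. Invertibility of the saddle point map $S\mapsto\mathrm{diag}\,G$ near the solution, from negative definiteness on $\mathcal S^Z_{\{-\}^L}$, is the key input we would establish first.
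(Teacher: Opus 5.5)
Your proposal is correct and follows essentially the same route as the paper. The steps match one for one: you use the Gaussian representation of the first columns restricted to the block-norm event, and a diagonal tilt fixed by requiring the empirical $\mathcal D_L$-valued Stieltjes transform to equal $\mathbb I_L$, so that the concentration variables are $\mathfrak R^{\mathcal D_L}_{Z_N}(P)$. You then get concentration of $\|g_l\|_2^2/N$ under the coupled Gaussian from a uniform bound on the tilted covariance, and finish by differentiating along the ray $wP$ and integrating from $w=0$.

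Two small points. For $\beta=1$ the tilt must contain $\frac{2}{\beta}\Theta Z_N\Theta$ rather than $\Theta Z_N\Theta$; this factor is what produces $P=\mathrm{diag}\{2\theta_l/\beta\}$. Your use of only the diagonal constraint is accurate, because the norm event never involves $g_l^\dagger g_k$. The paper instead imposes the full condition $\mathrm{Tr}_L[\Lambda^{-1}]=\mathbb I_L$ and invokes R-cyclicity to make the off-diagonal entries vanish.
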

For $L=1$, $R_Z^{\mathcal D_1}$ is the scalar R-transform from free probability theory, and we recover $\eqref{eq:rankOneGMasymptotic}$.
For any $L>1$, $R_Z^{\mathcal{D}_L}$ can be expressed by a power series of the operator-valued free cumulants on $(\mathcal A,\mathcal E_{\mathcal D}, \mathcal D_L)$.
Hence, the $L$-fold coupled spherical integral is interpreted as the operator-valued free counterpart to the Laplace transform for R-cyclic matrices.
Furthermore, our result can be applied to recover the asymptotics of non-self-adjoint spherical integrals.
Benaych-Georges introduces the following asymptotic exponent in \cite{benaych-georgesRectangularRTransformLimit2011} for the rank-one case
\begin{align}
    I^\mathrm{BG}(\theta)
    =~&
    \lim_{N\uparrow\infty}
    \frac{1}{N}
    \log
    \int
        \exp\left\{\sqrt{N T} \theta \mathrm{Re}\left(
                \mathrm{tr}\left(
                    D_N U_N E_N V_T
                \right)
            \right)\right\}
        \mathrm d m_N^{(\beta)}(U_N)
        \mathrm d m_T^{(\beta)}(V_T)
    \label{eq:benaych_georges_rect_sph_int}
    =
    \beta \int_{0}^{\theta/\beta} \frac{C_{\mu_E}^{(\lambda)}(t^2)}{t}\mathrm dt
\end{align}
where $\mathrm{Re}:\mathbb C \to \mathbb R$ recovers the real part of  a complex number, $U_N$ and $V_T$ are $N\times N$ and $T \times T$ independent Haar-distributed orthogonal ($\beta=1$) or unitary ($\beta=2$) matrices, respectively, $E_N$ is an $N\times T$ matrix with empirical singular law $\hat \mu_E^N$ which converges to $\mu_E$ as $N\uparrow\infty$, and $D_N$ is a $T\times N$ matrix with all zero entries except for one entry of value one.
Furthermore, $\lambda=\lim_{N\uparrow\infty}N/T\in[0,1]$, and $C_{\mu_E}^{(\lambda)}$ is the \textit{rectangular R-transform with ratio $\lambda$} of the probability measure $\mu_E$.
Assume that $\lambda=1$, i.e. all matrices in \eqref{eq:benaych_georges_rect_sph_int} are square, and $\lim_{N\to\infty}E_N$ is R-diagonal.
Without loss of generality we assume that $D_N$ has its single non-zero entry at position $(1,1)$, such that
\begin{align}
    I^\mathrm{BG}(\theta)
    =
    {I_{Y}^{(\beta)}} \left(\left\{\frac{\theta}{2},\frac{\theta}{2}\right\}\right)
    =
    \frac{\beta}{2}
    \int_{0}^1
        \mathrm{tr}\left(
            R_Y^{\mathcal D_2}\left(\frac{2\theta w}{\beta}\mathbb I_2\right)\frac{2\theta}{\beta}\mathbb I_2
        \right)
    \mathrm d w,
    \label{eq:non_self-adjoint_spherical_integral}
\end{align}
with
\begin{align}
    Y_N = \begin{bmatrix}
        0 & E_N 
        \\
        E_N^\dagger & 0
    \end{bmatrix},
    \label{eq:Z_N_non_self-adjoint_spherical_integral}
\end{align}
and $\mathbb I_2$ being the $2\times 2$ identity matrix.
This equality is confirmed  by analyzing the power series $C_{\mu_E}^{(\lambda)}(t^2)=\sum_{n\geq 1} c_{2n}(E) t^{2n}$ \cite{benaych-georgesRectangularRandomMatrices2009} where $c_{2n}(E)$ is equal to entry $(1,1)$ of the operator-valued cumulant $\kappa_{2n}^{\mathcal D_2}(Y)$ as defined in \cite{mingoFreeProbabilityRandom2017} with $Y=\lim_{N\uparrow\infty} Y_N$ comprised of the off-diagonal elements $E$ and $E^\dagger$.
Note that $Y$ is R-cyclic such that Theorem~\ref{thm:pL_rank1_integral} holds, $\kappa_{2n}^{\mathcal D_2}(Y) = \mathrm{diag}\left\{\kappa_{n}(EE^\dagger),\kappa_{n}(E^\dagger E)\right\}$, and $\kappa_{2n+1}^{\mathcal D_2}(Y)=0$ for all $n\in\mathbb N_0$ \cite{nicaRCyclicFamiliesMatrices2002}.
The equality in \eqref{eq:non_self-adjoint_spherical_integral} is then obtained by expanding the power series of $\mathrm{tr}(R_Y(Pw) P)$ and $C_{\mu_E}^{(\lambda)}$ with corresponding substitution of the integration variable.
Based on this observation we postulate that Theorem~\ref{thm:pL_rank1_integral} may be extended to a rectangular setting with $U_l$ as $N_l\times N_l$ matrices.
Rigorous treatment of this case is left to future work.
\\
Theorem~\ref{thm:pL_rank1_integral} is complemented by Theorem~\ref{thm:pL_rankM_integral} to allow $\mathrm{rank}(D_lD_l^\dagger)=O\left(N^{1/2-\epsilon}\right)$ for any $\epsilon>0$, analogously to \cite[Theorem 7]{guionnetFourierViewTransform2005}:
\begin{theorem}[$L$-fold, coupled spherical integration for $\mathrm{rank}(D_l D_l^\dagger)=M(N)>1$]
\label{thm:pL_rankM_integral}
Let $L=O(1)$, $M(N)=O\left(N^{1/2-\epsilon}\right)$ for any $\epsilon>0$, and $D_l=O_l\Sigma_l V^\dagger$ be the singular-value decomposition of $D_l$ for all $l$, i.e. $D_l$ has $M(N)$ non-zero singular values $\sqrt{\theta_{l,i}}$, and the right unitary/orthogonal matrix $V^\dagger$ of the SVD of $D_l$ is identical for all $l$.
Let $Z_N$ and $Z$ be defined as in Theorem~\ref{thm:pL_rank1_integral} and all $\theta_{l,i}$ are small enough such that
\begin{align}
    -P_i=-\mathrm{diag}\left\{\frac{2\theta_{1,i}}{\beta},\ldots,\frac{2\theta_{L,i}}{\beta}\right\}
    \in \left\{\left.G^{\mathcal{D}_L}_Z(S) \right| S\in\mathcal S^Z_{\{-\}^L}\right\},
\end{align}
and $\hat \mu_D^N$, the joint empirical measure of all $\theta_{l,i}$, converges to some $\mu_D$ with analytic density
\begin{align}
\mu_D
=
\lim_{N\uparrow\infty}
\hat \mu_D^N
=
\lim_{N\uparrow\infty}
\frac{1}{M(N)}
\sum_{i=1}^{M(N)}
        \delta_{\{\theta_{l,i}\}},
\end{align}
where $\delta_{\{\theta_{l,i}\}}$ denotes the Dirac measure centered on the point $(\theta_{1,i}, \ldots, \theta_{L,i})$. 
Then, the $L$-fold coupled HCIZ integral of rank $M(N)>1$ factors into $M(N)$ coupled rank-one integrals as
\begin{align}
    \lim_{N\uparrow\infty}
    \frac{1}{N M(N)}\log I_N^{(\beta)}(\{D_l\},\{E_{lk}\}) 
    &=
    \lim_{N\uparrow\infty}
    \frac{1}{M(N)}
    \sum_{i=1}^{M(N)}
    \frac{\beta}{2}
    \int_{0}^1
        \mathrm{tr}\left(
            R_Z^{\mathcal D_L}(P_iw)P_i
        \right)
        \mathrm d w
        \\
    &=
    \int
        I^{(\beta)}_Z(\{\theta_l\})
        \mathrm d \mu_D(\{\theta_l\}).
\end{align}
\end{theorem}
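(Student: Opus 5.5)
We follow the strategy of \cite[Theorem 7]{guionnetFourierViewTransform2005}: reduce the rank-$M(N)$ integral to $M(N)$ rank-one integrals of the type in Theorem~\ref{thm:pL_rank1_integral}, with errors that are $o(NM(N))$ on the logarithmic scale.

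\textbf{Step 1: reduction to orthonormal columns.} Since all $D_l$ share the right singular matrix $V$, we have $D_kD_l^\dagger=O_k\Sigma_k\Sigma_l^\dagger O_l^\dagger$. By cyclicity of the trace and invariance of the Haar measure ($U_l\mapsto U_lO_l$), the exponent becomes
\begin{align}
\sum_{l,k}\sum_{i=1}^{M(N)} N\sqrt{\theta_{l,i}\theta_{k,i}}\, u_{l,i}^\dagger E_{lk}u_{k,i},
\end{align}
where $u_{l,i}$ is the $i$-th column of $U_l$. Thus only the first $M(N)$ columns of each $U_l$ enter. Write $u_{l,i}=$ Gram--Schmidt$(g_{l,i})$, where the $g_{l,i}$ are i.i.d. standard Gaussian vectors in $\mathbb A^N$, as in the proof of Theorem~\ref{thm:pL_rank1_integral}.

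\textbf{Step 2: controlling Gram--Schmidt.} For each $l$, the orthonormalized vectors satisfy $\|u_{l,i}-g_{l,i}/\|g_{l,i}\|\|=O(\sqrt{M(N)/N})$ on an event of overwhelming probability. The overlaps $\langle g_{l,i},g_{l,j}\rangle/N$ for $i\neq j$ are $O(N^{-1/2+\delta})$ outside an event of probability $e^{-cN^{2\delta}}$. Because $\|E_{lk}\|_\infty$ is bounded, replacing each $u_{l,i}$ by $g_{l,i}/\|g_{l,i}\|$ changes the exponent by at most
\begin{align}
N\cdot M(N)\cdot O\!\left(M(N)N^{-1/2+\delta}\right)=o(NM(N)).
\end{align}
This is exactly where $M(N)=O(N^{1/2-\epsilon})$ is used: choose $\delta<\epsilon$. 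The exceptional events contribute at most $e^{CNM(N)}e^{-cN^{2\delta}\cdots}$. Following Guionnet--Ma\"ida, we tune the deviation thresholds (e.g. $N^{-1/2}M(N)^{1/2}$-type bounds, using large deviations for Wishart entries) so that this contribution is negligible against the main term.

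\textbf{Step 3: factorization and rank-one asymptotics.} After the replacement, the integrand factorizes over $i$: the vectors $(g_{1,i},\dots,g_{L,i})_i$ are independent across $i$. Hence
\begin{align}
\log I_N = \sum_{i=1}^{M(N)}\log I_N^{(\beta)}(\{\theta_{l,i}\},\{E_{lk}\}) + o(NM(N)),
\end{align}
where each summand is an $L$-fold coupled rank-one integral. For each $i$, Theorem~\ref{thm:pL_rank1_integral} gives $\frac1N\log I_N^{(\beta)}(\{\theta_{l,i}\},\{E_{lk}\})\to I_Z^{(\beta)}(\{\theta_{l,i}\})$.

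To sum these limits we need convergence uniform in $\{\theta_{l,i}\}$ over a compact set inside the admissible domain $-P_i\in G_Z^{\mathcal D_L}(\mathcal S^Z_{\{-\}^L})$. We would extract uniformity from the rank-one proof: the upper and lower bounds there are continuous in $P$, and the error terms depend only on $\|E_{lk}\|_\infty$ and on the distance of $-P$ to the boundary of the domain. An alternative is an equicontinuity argument in $\theta$, since the exponent is Lipschitz in $\theta$ with constant bounded by $\max\|E_{lk}\|_\infty$. Uniformity then yields
\begin{align}
\frac{1}{NM(N)}\log I_N=\frac1{M(N)}\sum_i I_Z^{(\beta)}(\{\theta_{l,i}\})+o(1).
\end{align}
Finally, weak convergence $\hat\mu_D^N\to\mu_D$ together with continuity of $I_Z^{(\beta)}$ gives the integral against $\mu_D$.

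\textbf{Main obstacle.} The hardest step is the uniformity in Step 3, together with the Gram--Schmidt error bookkeeping across the $L$ coupled families in Step 2. Unlike the case $L=1$, the $E_{lk}$ are not jointly diagonalizable, so the rank-one proof's tilted Gaussian measures depend on $\{\theta_{l,i}\}$ through the operator-valued Stieltjes inverse $G_Z^{\mathcal D_L}$. Uniform control of the concentration of the quadratic forms $g_{l}^\dagger E_{lk} g_k/N$ under these tilted measures must therefore be established explicitly.
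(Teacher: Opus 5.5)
There is a genuine gap, but it is not the uniformity issue you single out. Your own equicontinuity remark settles that. For the rank-one coupled integrals,
\[
\bigl|\tfrac1N\log I_N(\theta)-\tfrac1N\log I_N(\theta')\bigr|\le\sum_{l,k}\bigl|\sqrt{\theta_l\theta_k}-\sqrt{\theta_l'\theta_k'}\bigr|\,\|E_{lk}\|_\infty
\]
uniformly in $N$. So pointwise convergence from Theorem~\ref{thm:pL_rank1_integral} upgrades to uniform convergence on compact subsets of the admissible region, which is cleaner than how the paper passes to $\int I_Z^{(\beta)}\,\mathrm d\mu_D$.

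\textbf{Step 2.} No tuning of thresholds makes $e^{CNM(N)}\Pr\{\mathcal B_N^{\mathrm c}(\kappa)\}$ negligible. The entries of the Gram matrices $(g_{l,i}^Tg_{l,j}/N)_{i,j}$ deviate at speed $N$: at best $e^{-cN}$ for order-one thresholds, and $e^{-cN^{2\delta}}$ at your threshold $N^{-1/2+\delta}$. They never decay at speed $NM(N)$. The missing idea, which is what Guionnet--Ma\"ida use and what underlies \eqref{eq:p2_rankM_general_bounds}, is that the Gram--Schmidt frame of $(g_{l,1},\ldots,g_{l,M(N)})$ is Haar-distributed and independent of its Gram matrix (the $QR$/Bartlett decomposition). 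Since $\mathcal B_N(\kappa)$ depends only on these Gram matrices, $I_N=\mathbb E[\mathfrak 1_{\mathcal B_N(\kappa)}e^{NF}]/P(\mathcal B_N(\kappa))$ holds exactly, and the complement costs only a factor $1+\epsilon(N,\kappa)$.

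\textbf{Step 3.} The factorization over $i$ only gives the upper bound. The replacement $u_{l,i}\to g_{l,i}/\|g_{l,i}\|_2$ is controlled only on $\mathcal B_N(\kappa)$, and that indicator couples different $i$ through the overlaps $g_{l,i}^Tg_{l,j}$. Dropping it is harmless from above. From below you need
\[
\mathbb E\Bigl[\mathfrak 1_{\mathcal B_N(\kappa)}\prod_ie^{NF_i}\Bigr]\geq e^{-o(NM(N))}\prod_i\mathbb E[e^{NF_i}],
\]
that is, the overlaps must stay below $N^{-\kappa}$, simultaneously for about $LM(N)^2$ pairs, under the product of the \emph{tilted} measures $\propto e^{NF_i}$. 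This is a concentration statement about the tilted measures. It does not follow from the $\frac1N\log$ asymptotics of Theorem~\ref{thm:pL_rank1_integral}, and a H\"older comparison with the untilted law only controls overlaps of order one, not $N^{-\kappa}$.

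This is exactly why the paper does not use the rank-one theorem as a black box. On $\mathcal B_N(\kappa)$ it also linearizes the norms with concentration variables $v_{l,i}$, so the remaining expectation becomes $P_N(\mathcal B_N(\kappa))\prod_i|\Lambda_i|^{-1/2}$ under $\bigotimes_i\mu(\cdot;0,\Lambda_i^{-1})$, as in \eqref{eq:rankM_distribution_PN}. It chooses $\mathrm{diag}\{v_{1,i},\ldots,v_{L,i}\}=\mathfrak R_{Z_N}^{\mathcal D_L}(P_i)$ so that $\mathrm{Tr}_L[\Lambda_i^{-1}]=\mathbb I_L$. It then proves $P_N(\mathcal B_N(\kappa))\geq 1/2$ under that product Gaussian via sub-exponential tail bounds for $\|g_{l,i}\|_2^2/N-1$ and $g_{l,i}^Tg_{l,j}/N$, using the bound on $\|[\Lambda_i^{-1}]_{ll}\|_\infty$ that the admissibility condition supplies. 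You need this step, or an equivalent concentration lemma for the tilted rank-one measures, to close the lower bound.
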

Theorem~\ref{thm:pL_rankM_integral} is, however, restricted by the condition that all $\theta_{l,i}$ must have the same sign (note that a negative sign could be absorbed by $Z_N$).
This restriction is relaxed by associating every factorized rank-one HCIZ integral with an individual $Z_{N,i}$.
\begin{theorem}[Modified, $L$-fold, coupled spherical integration for rank $M(N)\geq 1$]
\label{thm:pL_rankM_integral_modification}
Let $L=O(1)$, $M(N)=O\left(N^{1/2-\epsilon}\right)$ for some $\epsilon >0$, and $D_{l,i}= O_{l,i}\Sigma_{l,i} V^\dagger$ have a single non-zero singular value $\sqrt{\theta_{l,i}}$ at the $i^\mathrm{th}$ on-diagonal position of $\Sigma_{l,i}$. 
Define the modified, $L$-fold coupled HCIZ integral by
\begin{align}
    J_N^{(\beta)}\left(\left\{D_{l,i}\right\},\left\{E_{lk,i}\right\}\right)
    =
    \int
        \exp\left\{
            \sum_{i=1}^{M(N)}
            \sum_{l,k=1}^{L}
                N\mathrm{tr}\left(
                    D_{k,i} D_{l,i}^\dagger
                    U_l^\dagger
                    E_{lk,i}
                    U_k
                \right)
        \right\}
        \prod_{l=1}^{L}
            \mathrm d m_N^{(\beta)}(U_l)
            .
\end{align}
The block matrix $Z_{N,i}$ is structured as in Definition~\ref{def:operator_valued_convergence}, i.e. the block at position $(l,k)$ of $Z_{N,i}$ is $E_{lk,i}$ with $E_{lk,i} = E_{kl,i}^\dagger\in \mathbb A^{N\times N}$ for all $(l,k)$, $Z_{N,i}$ converges to some $Z_{i}$ with finite moments as $N\uparrow\infty$, is R-cyclic, and $||E_{lk,i}||_\infty<\infty$ for all $(l,k,i)$ as $N\uparrow\infty$.
If all $\theta_{l,i}$ are sufficiently small, such that
\begin{align}
-P_{i}=-\mathrm{diag\left\{\frac{2\theta_{1,i}}\beta,\ldots,\frac{2\theta_{L,i}}\beta\right\}}
    \in \left\{\left.G_{Z_{i}}^{\mathcal D_L}(S) \right| S^{Z_i}\in\mathcal S_{\{-\}^L}\right\},
\end{align}
then
\begin{align}
    \lim_{N\uparrow\infty} \frac{1}{NM(N)}
    \log J_N^{(\beta)}\left(\left\{D_{l,i}\right\},\left\{E_{lk,i}\right\}\right)
    =
    \lim_{N\uparrow\infty}
    \frac{1}{M(N)}
    \sum_{i=1}^{M(N)}
        \frac{\beta}{2}
        \int_{0}^1
            \mathrm{tr}\left(
                R_{Z_{i}}^{\mathcal D_L}(P_i w) P_i
            \right)
            \mathrm d w.
\end{align}  
\end{theorem}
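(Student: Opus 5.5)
The plan is to reduce $J_N^{(\beta)}$ to an integral over $M(N)$ columns of each $U_l$, and then to imitate the factorization argument of \cite[Theorem 7]{guionnetFourierViewTransform2005}. Since all $D_{l,i}$ share the right singular matrix $V$, the Haar invariance of each $m_N^{(\beta)}(U_l)$ lets us absorb $O_{l,i}$ and $V$ into the $U_l$. One subtlety is that the left factors $O_{l,i}$ depend on $i$. We will assume, as in the statement, that the $i$-th term only involves the $i$-th singular direction, so that after the change of variables we get
\begin{align}
\sum_{i=1}^{M(N)}\sum_{l,k=1}^L N\sqrt{\theta_{l,i}\theta_{k,i}}\, u_{l,i}^\dagger E_{lk,i} u_{k,i},
\end{align}
where $u_{l,i}$ is the $i$-th column of $U_l$. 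The exponent is therefore a sum of $M(N)$ rank-one $L$-fold coupled forms. These forms are coupled across $i$ only through the orthonormality constraint on $\{u_{l,1},\ldots,u_{l,M(N)}\}$ within each $l$.

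Next we represent the columns through Gram–Schmidt of independent Gaussian vectors. For each $l$, take i.i.d. standard Gaussian vectors $g_{l,1},\ldots,g_{l,M(N)}$ in $\mathbb A^N$, with the $L$ families independent. Orthonormalizing them gives $u_{l,i}=g_{l,i}/\|g_{l,i}\| + \text{(correction)}$. Because the overlaps satisfy $\langle g_{l,i},g_{l,j}\rangle/N = O(N^{-1/2})$ with exponentially high probability, the correction changes each quadratic form by at most $O(M(N)N^{-1/2})\|E_{lk,i}\|_\infty$. Summed over $i$ and multiplied by $N$, this gives an error of $O(M(N)^2N^{1/2})$, which is $o(NM(N))$ exactly when $M(N)=O(N^{1/2-\epsilon})$. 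This uses the uniform bound on $\|E_{lk,i}\|_\infty$ and small $\theta_{l,i}$, and follows the error control in \cite[Theorem 7]{guionnetFourierViewTransform2005}. Concretely, we restrict to the event where all Gram overlaps are at most $N^{-1/2+\epsilon/2}$ and all norms satisfy $|\|g_{l,i}\|^2/N-1|\le N^{-\epsilon'}$. Large deviation bounds make the complement negligible at exponential scale $NM(N)$.

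On this good event the integrand factorizes up to $e^{o(NM(N))}$ into a product over $i$ of functions of the independent Gaussian families $\{g_{l,i}\}_{l}$. So $J_N$ is, up to $e^{o(NM(N))}$, equal to $\prod_i \mathbb E[\exp\{N\sum_{l,k}\sqrt{\theta_{l,i}\theta_{k,i}}\, g_{l,i}^\dagger E_{lk,i}g_{k,i}/(\|g_{l,i}\|\|g_{k,i}\|)\}]$. Each factor is exactly the rank-one coupled integral of Theorem~\ref{thm:pL_rank1_integral} with data $(Z_{N,i},P_i)$. Applying that theorem to each factor then gives the averaged formula.

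The main obstacle is \emph{uniformity}. Theorem~\ref{thm:pL_rank1_integral} gives a limit for each fixed $i$, but here both the data and the number of factors vary with $N$. We therefore need the rank-one convergence to hold uniformly over $i\le M(N)$. The first attempt is to redo the rank-one upper and lower bounds (the tilted Gaussian measure with covariance determined by $S$ solving $G_{Z_i}^{\mathcal D_L}(S)=-P_i$) with explicit error terms. These error terms depend only on $\sup_i\|E_{lk,i}\|_\infty$, a uniform margin keeping $-P_i$ inside the admissible region, and the rate of convergence of the $\mathcal D_L$-valued moments of $Z_{N,i}$. If convergence of the moments is not uniform in $i$, we would fall back on Cesàro averaging: the errors are $o(1)$ for each $i$ and uniformly bounded, so their average over $i$ tends to zero.
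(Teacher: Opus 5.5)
Your skeleton (Gram--Schmidt of Gaussians, a good event on norms and overlaps, an orthogonalization error of $o(NM(N))$ when $M(N)=O(N^{1/2-\epsilon})$) matches the paper's. The gap is the step ``on this good event the integrand factorizes, so $J_N$ is, up to $e^{o(NM(N))}$, equal to $\prod_i \mathbb E[f_i]$'', where $f_i$ is the $i$-th rank-one factor.

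\textbf{The missing lower bound.} The good event contains the overlap constraints $|g_{l,i}^\dagger g_{l,j}|/N\le N^{-\kappa}$ for $i\neq j$. So $\mathfrak 1_{\mathcal B_N(\kappa)}$ is not a product over $i$, and $\mathbb E[\mathfrak 1_{\mathcal B_N(\kappa)}\prod_i f_i]$ does not factor. Dropping the indicator gives you the upper bound. For the lower bound, however, you must show that $\mathcal B_N(\kappa)$ keeps probability at least $e^{-o(NM(N))}$ under the measure tilted by $\prod_i f_i$.

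Theorem~\ref{thm:pL_rank1_integral} used as a black box cannot supply this. It identifies $\frac1N\log\mathbb E[f_i]$, but says nothing about where the tilted mass of $(g_{l,i})_l$ sits. If that mass concentrated near a few directions, overlaps across $i$ would be of order one.

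The paper therefore never applies the rank-one theorem per factor. On the event, it linearizes every exponent with concentration variables $v_{l,i}$, so the tilted law becomes the explicit product Gaussian $\prod_i\mu(\cdot;0,\Lambda_i^{-1})$, independent across $i$. The $v_{l,i}$ are fixed by \eqref{eq:P2_rankM_OVStilTrans_condition}, i.e.\ $\mathrm{diag}\{v_{1,i},\dots,v_{L,i}\}=\mathfrak R^{\mathcal D_L}_{Z_{N,i}}(P_i)$. This makes $\|g_{l,i}\|_2^2/N$ have mean one, while $g_{l,i}^\dagger g_{l,j}/N$ has mean zero by independence. Sub-exponential tail bounds then give $P_N(\mathcal B_N(\kappa))\ge 1/2$ under this tilted measure. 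Their parameters are controlled by $\|[\Lambda_i^{-1}]_{ll}\|_\infty$, which is bounded uniformly through the margin $\eta$ of \eqref{eq:P2_rank1_eta_bound}. All error terms then carry $\sup_{l,i}$ constants, which is also where uniformity in $i$ comes from. This tilted-Gaussian lower bound is the idea your proposal is missing.

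\textbf{Discarding the complement.} Your claim that large-deviation bounds make the complement of the good event negligible at scale $NM(N)$ is false. Under the Gaussian measure, $\Pr\{\mathcal B_N^{\mathrm c}\}$ is at best stretched-exponentially small, e.g.\ of order $M(N)^2e^{-cN^{\epsilon}}$ with your thresholds. Meanwhile the integrand can exceed $J_N$ by a factor $e^{cNM(N)}$. The correct mechanism, inherited from Guionnet--Ma\"ida and behind the factor $(1+\epsilon(N,\kappa))$ in \eqref{eq:p2_rankM_general_bounds}, is different. The Gram--Schmidt output is independent of the Gram matrices of the $g_{l,i}$, so $\mathbb E[\mathfrak 1_{\mathcal B_N(\kappa)}e^{NF(U)}]=\Pr\{\mathcal B_N(\kappa)\}\,J_N$ holds exactly.

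\textbf{The Ces\`aro fallback.} This argument is invalid for a triangular array. Having $\epsilon_i(N)\to0$ for each fixed $i$ with $|\epsilon_i(N)|\le 1$ does not imply $\frac{1}{M(N)}\sum_{i\le M(N)}\epsilon_i(N)\to0$. For example, take $\epsilon_i(N)=\mathfrak 1\{i>\log N\}$. You need error bounds that are uniform in $i$, which the explicit-bound route provides.
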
\
This theorem holds for all finite $L$; the modification can hence be applied to \cite[Theorem 7]{guionnetFourierViewTransform2005} at $L=1$.
It is particularly useful for applications which do not admit the exact form required by Theorem~\ref{thm:pL_rankM_integral}, as stated by Lemma~\ref{lem:symmetric_factor_extension_to_Theorem_2}.
\begin{lemma}[Symmetric factor extension to Theorem~\ref{thm:pL_rankM_integral}]
    \label{lem:symmetric_factor_extension_to_Theorem_2} Let $\theta_{lk,i}\in \mathbb R^+$ and $a_{lk,i}\in\{\pm 1\}$ for all $(l,k,i)$, $M(N)=O\left(N^{1/2-\epsilon}\right)$,
    $\Theta_{lk} = \mathrm{diag}\big\{\theta_{lk,1},\ldots, \theta_{lk,M(N)},0,\ldots,0\big\}\in\mathbb R^{N\times N}$, and $A_{lk}=\mathrm{diag}\big\{a_{lk,1},\ldots, a_{lk,M(N)},0,\ldots,0\big\}\in \mathbb R^{N\times N}$ with $A_{lk}=A_{kl}$ and $\Theta_{lk}=\Theta_{kl}$.
    Define an $L$-fold, coupled spherical integral of the form
    \begin{align*}
        \hat I_N^{(\beta)}
        \left(
            \{\Theta_{lk}\},\{A_{lk}\},\{E_{lk}\}
        \right)
        =
        \int
            \exp\left\{
                \sum_{l,k=1}^L
                    N \mathrm{tr}\left(
                        A_{lk}\Theta_{lk}  U_l^\dagger E_{lk} U_k
                    \right)
            \right\}
            \prod_{l=1}^L 
                \mathrm d m_N^{(\beta)}(U_l),
    \end{align*}
    and let the following block matrix $C_{N,i}$ converge to some $C_{i}$ for all $i\in[M(N)]$ in the sense of Definition~\ref{def:operator_valued_convergence}
    \begin{align*}
        C_{N,i}
        =
        \begin{bmatrix}
            a_{11,i} E_{11} 
            & a_{12,i} \frac{\theta_{12,i}}{\sqrt{\theta_{11,i}\theta_{22,i}}} E_{12}
            & \ldots
            \\
            a_{21,i} \frac{\theta_{21,i}}{\sqrt{\theta_{11,i}\theta_{22,i}}}E_{21}
            & a_{22,i} E_{22}  
            \\
            \vdots && \ddots
        \end{bmatrix}
        \in \mathbb A^{LN\times LN}.
    \end{align*}
    Furthermore, all moments of $C_{i}$ are finite, $C_{i}$ is R-cyclic and self-adjoint, and $||E_{lk}||_\infty$ is finitely bounded for all $(l,k)$ as $N\uparrow\infty$.
    If $M(N)=O\left(N^{1/2-\epsilon}\right)$ for any $\epsilon>0$, $L=O(1)$ and all $\theta_{lk,i}$ are small enough such that
    \begin{align*}
        -P_i = -\mathrm{diag}\left\{
            \frac{2\theta_{11,i}}{\beta},\frac{2\theta_{22,i}}{\beta}, \ldots, \frac{2\theta_{LL,i}}{\beta}
        \right\}
        \in \left\{
            \left.
            G_{C_{i}}^{\mathcal D_L}(S)
            \right|
            S\in\mathcal S_{\{-\}^L}^{C_i}
        \right\},
    \end{align*}
    then
    \begin{align*}
        \lim_{N\uparrow\infty}
        \frac{1}{NM(N)} \log \hat I_N^{(\beta)}\left(
            \{\Theta_{lk}\},\{A_{lk}\},\{E_{lk}\}
        \right)
        =
        \frac{1}{M(N)}
        \sum_{i=1}^{M(N)}
        \frac{\beta}{2}
        \int_0^1
            \mathrm{tr}\left(
                R^{\mathcal D_L}_{C_{i}}(P_i w) P_i
            \right)
            \mathrm d w.
    \end{align*}
\end{lemma}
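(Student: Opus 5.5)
The plan is to rewrite $\hat I_N^{(\beta)}$ in exactly the form of the modified integral $J_N^{(\beta)}$ of Theorem~\ref{thm:pL_rankM_integral_modification}, with the $i$-dependent inner block matrices $Z_{N,i}=C_{N,i}$, and then invoke that theorem. Since $\Theta_{lk}$ and $A_{lk}$ are diagonal and supported on the first $M(N)$ indices, write $A_{lk}\Theta_{lk}=\sum_{i=1}^{M(N)} a_{lk,i}\theta_{lk,i}\, e_ie_i^\dagger$, where $e_i$ is the $i$-th canonical basis vector. Then
\begin{align*}
\sum_{l,k=1}^L N\mathrm{tr}\left(A_{lk}\Theta_{lk}U_l^\dagger E_{lk}U_k\right)
=\sum_{i=1}^{M(N)}\sum_{l,k=1}^L N\, a_{lk,i}\theta_{lk,i}\, e_i^\dagger U_l^\dagger E_{lk}U_k e_i .
\end{align*}

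Next I choose rank-one factors. Set $D_{l,i}=\sqrt{\theta_{ll,i}}\,e_ie_i^\dagger$, so that $O_{l,i}=V=\mathbb I_N$ and the single nonzero singular value $\sqrt{\theta_{ll,i}}$ sits at the $i$-th diagonal position, as the theorem requires. Set $E_{lk,i}=a_{lk,i}\frac{\theta_{lk,i}}{\sqrt{\theta_{ll,i}\theta_{kk,i}}}E_{lk}$. Then
\begin{align*}
N\mathrm{tr}\left(D_{k,i}D_{l,i}^\dagger U_l^\dagger E_{lk,i}U_k\right)=N\sqrt{\theta_{kk,i}\theta_{ll,i}}\,e_i^\dagger U_l^\dagger E_{lk,i}U_ke_i ,
\end{align*}
which reproduces the $(l,k,i)$ term above. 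For $l=k$ the prefactor is just $a_{ll,i}$, matching the diagonal blocks of $C_{N,i}$. For $l\neq k$ the normalization is $\sqrt{\theta_{ll,i}\theta_{kk,i}}$, and I read the statement's $\sqrt{\theta_{11,i}\theta_{22,i}}$ as the $L=2$ instance of this general entry. The block matrix $Z_{N,i}$ with blocks $E_{lk,i}$ is therefore precisely $C_{N,i}$, and $\hat I_N^{(\beta)}=J_N^{(\beta)}(\{D_{l,i}\},\{E_{lk,i}\})$.

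It remains to check the hypotheses of Theorem~\ref{thm:pL_rankM_integral_modification}.
\begin{itemize}
\item \emph{Self-adjointness of the blocks.} We have $E_{lk,i}=E_{kl,i}^\dagger$, which follows from $E_{lk}=E_{kl}^\dagger$ (part of the standing setup), $a_{lk,i}=a_{kl,i}$, $\theta_{lk,i}=\theta_{kl,i}$, and the fact that the scalar prefactors are real.
\item \emph{Convergence, R-cyclicity and finite moments.} These of $C_{N,i}\to C_i$ are assumed directly.
\item \emph{Norm bound.} $\|E_{lk,i}\|_\infty<\infty$ holds provided the ratios $\theta_{lk,i}/\sqrt{\theta_{ll,i}\theta_{kk,i}}$ stay bounded.
\item \emph{Smallness.} The condition on $-P_i$ is stated verbatim, with $P_i$ built from $\theta_{ll,i}$.
\end{itemize}
The theorem then yields the claimed limit with $R_{C_i}^{\mathcal D_L}$.

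The only delicate point, and the one I expect to be the main obstacle, is the norm bound on the rescaled blocks. This needs the ratios $\theta_{lk,i}/\sqrt{\theta_{ll,i}\theta_{kk,i}}$ to be uniformly bounded in $i$ and $N$. I would argue this is implicit in the hypothesis that $C_{N,i}$ converges to $C_i$ with finite moments, or else add it as a mild uniformity assumption; the uniformity is what the error control in Theorem~\ref{thm:pL_rankM_integral_modification} actually uses. A second point to watch is that $\theta_{ll,i}>0$ is needed for the normalization to make sense; this is guaranteed by $\theta_{lk,i}\in\mathbb R^+$.
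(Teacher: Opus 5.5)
Your proposal is correct and is essentially the paper's own proof. The paper likewise splits $A_{lk}\Theta_{lk}$ into rank-one pieces supported at $(i,i)$, takes $D_{l,i}=\Theta_{ll,i}^{1/2}$ and rescaled blocks $\hat E_{lk,i}=a_{lk,i}\theta_{lk,i}(\theta_{ll,i}\theta_{kk,i})^{-1/2}E_{lk}$, identifies $\hat I_N^{(\beta)}$ with $J_N^{(\beta)}$, and invokes Theorem~\ref{thm:pL_rankM_integral_modification}.

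The paper does not address the norm issue you flag, but it is milder than you fear. In $\Lambda_i$ and in the error terms of that theorem's proof, the rescaled blocks only appear as $\sqrt{\theta_{ll,i}\theta_{kk,i}}\,E_{lk,i}=a_{lk,i}\theta_{lk,i}E_{lk}$. So bounded $\theta_{lk,i}$ and bounded $||E_{lk}||_\infty$ already control them, with no uniform bound on the ratios needed.
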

The proof follows from Theorem~\ref{thm:pL_rankM_integral_modification}:
\begin{proof}[Proof of Lemma~\upshape{\ref{lem:symmetric_factor_extension_to_Theorem_2}}]
    Let $\Theta_{lk,i}\in\mathbb R^{N\times N}$ have a single non-zero entry at position $(i,i)$ denoted by $\theta_{lk,i}$.
    Note that $\Theta_{lk} = \sum_{i}\Theta_{lk,i}$, such that
    \begin{align}
        \hat I_N^{(\beta)}
        \left(
            \{\Theta_{lk}\}, \{A_{lk}\}, \{E_{lk}\}
        \right)
        &=
        \int
            \exp\left\{
                \sum_{\substack{i=1\\\phantom{k\neq l}}}^{M(N)}
                \sum_{l=1}^L
                    N \mathrm{tr}\left(
                        \Theta_{ll,i}  U_l^\dagger a_{ll,i}E_{ll} U_l
                    \right)
            \right.
            \\
            &~~~~~~\left.
                +
                \sum_{\substack{k=1\\k\neq l}}^{L}
                    N \mathrm{tr}\left(
                        \Theta_{ll,i}^{\frac{1}{2}}\Theta_{kk,i}^{\frac{1}{2}}  U_l^\dagger \frac{a_{lk,i}\theta_{lk,i}}{\sqrt{\theta_{ll,i}\theta_{kk,i}}}E_{lk} U_k
                    \right)
            \right\}
            \prod_{l=1}^L 
                \mathrm d m_N^{(\beta)}(U_l)
            \nonumber
            \\
        &= J_N^{(\beta)}\left(\left\{\Theta_{ll,i}\right\}, \left\{\hat E_{lk,i}\right\}\right),
    \end{align}
    where
    \begin{align}
        \hat E_{lk,i} = \frac{a_{lk,i} \theta_{lk,i}}{\sqrt{\theta_{ll,i}\theta_{kk,i}}} E_{lk}.
    \end{align}
\end{proof}
In this paper we only consider the high-temperature regime in which all $\theta_{l,i}$ are small enough such that the corresponding R-transform exists.
For rank-one HCIZ integrals, this constraint was lifted in \cite[Theorem 6]{guionnetFourierViewTransform2005}.
Recent results in \cite{guionnetAsymptoticsDimensionalSpherical2021} and \cite{hussonSphericalIntegralsSublinear2025} have extended these results to a finite number of non-zero eigenvalues ($M(N)=k=O(1)$) and extensive rank ($M(N)=o(N)$), respectively, assuming that the extremal eigenvalues of $E_N$ lie outside the support of $\mu_E$.
The extensive rank case in which the extremal eigenvalues do not violate the support of $\mu_E$ was studied in \cite{collinsNewScalingItzykson2007}.
Corresponding analysis of the $L$-fold, coupled spherical integral are subject to future work.
\section{Preliminaries}\label{3_preliminaries}
\subsection{Scalar Stieltjes transform}
The Stieltjes transform  $G_Z:\mathbb R\to \mathbb R$ of a probability measure $\mu_Z$ with support $\mathrm{supp}(\mu_Z) = [\lambda_\mathrm{min}(Z), \lambda_{\mathrm{max}}(Z)]$ on the real line is defined by
\begin{align}
    G_Z(s) 
    &= \int \frac{1}{z-s} d \mu_Z(z).
\end{align}
Let $Z=\lim_{N\uparrow\infty} Z_N$ be a unitarily invariant $N\times N$ random matrix with spectral measure $\mu_Z$, then $G_Z(s)$ is referred to as the Stieltjes transform of $Z$.
The Stieltjes transform admits a number of properties. We only restate the domain and monotonically increasing properties as they are relevant for the main results of this work.
See \cite{guionnetFourierViewTransform2005} for an exhaustive discussion of the Hilbert transform on the real line which is closely related to $G_Z(s)$.
\begin{prp}[Existence on the real line]
    \label{prop:scalar_stieltjes_transform_existence}
    $G_Z(s)$ exists on the real line for $s<\lambda_\mathrm{min}(Z)$ or $s>\lambda_\mathrm{max}(Z)$, where $\lambda_\mathrm{min}(Z)$ and $\lambda_\mathrm{max}(Z)$ are the minimum and maximum eigenvalues of $Z$, respectively.
\end{prp}
\begin{prp}[Monotonically increasing on the real line]
    \label{prop:scalar_stieltjes_transform_monotonic_increasing}
    The Stieltjes transform of $Z$ is positive and monotonically increasing for all $s<\lambda_\mathrm{min}(Z)$ and negative and monotonically increasing for all $s>\lambda_\mathrm{max}(Z)$.
    As $s\downarrow-\infty$, $G_Z(s)$ tends to zero from above, and as $s\uparrow\infty$, $G_Z(s)$ tends to zero from below.
    $G_Z(s)$ is hence bijective from $\mathbb R \setminus [\lambda_\mathrm{min}(Z),\lambda_\mathrm{max}(Z)]$ to $(G_Z^\mathrm{min},G_Z^\mathrm{max})\setminus\{0\}$ with $G_Z^\mathrm{max}=\lim_{s\uparrow\lambda_\mathrm{min}(Z)} G_Z(s)$, and $G_Z^\mathrm{min}=\lim_{s\downarrow\lambda_\mathrm{max}(Z)} G_Z(s)$.
\end{prp}
From Property \ref{prop:scalar_stieltjes_transform_monotonic_increasing} it is clear that the inverse of the Stieltjes transform with respect to composition $K_Z=G_Z^{-1}$ exists, is unique on the domain $(G_Z^\mathrm{min},G_Z^\mathrm{max})\setminus\{0\}$, and is monotonically increasing on this domain with a pole around $0$.
Denote the empirical Stieltjes transform of $Z_N$ by $\mathfrak{G}_{Z_N}(s)$.
We say that $Z_N$ converges to $Z$ if
\begin{align}
    G_Z(s) = \lim_{N\uparrow\infty} \mathfrak{G}_{Z_N}(s) = \lim_{N\uparrow\infty}\mathrm{Tr}\left[(Z_N-s \mathbb I_N)^{-1}\right].
\end{align}
If the spectrum of $Z_N$ is lower and upper bounded by $\lambda_\mathrm{min}(Z)$ and $\lambda_\mathrm{max}(Z)$, respectively, for all $N>N_0$ and some $N_0>0$, Properties~\ref{prop:scalar_stieltjes_transform_existence}~and~\ref{prop:scalar_stieltjes_transform_monotonic_increasing} extend to $\mathfrak{G}_{Z_N}(s)$ for all $N>N_0$.

\subsection{Operator-Valued Stieltjes transform}
Let $S = \{s_{lk}\}_{l,k=1}^L$ with $ s_{lk}\in \mathbb A$, and $Z_N \in \mathbb A^{LN\times LN}$ is an $L\times L$ self-adjoint block matrix with blocks $E_{lk}\in \mathbb A^{N\times N}$.
Assume that $Z_N$ converges to $Z$ in the sense of Definition~\ref{def:operator_valued_convergence}.
The operator-valued Stieltjes transform $G_Z^{\mathcal B_L}: \mathcal B_L \to \mathcal B_L$ on the algebra of $L\times L$ matrices $\mathcal B_L$ is then defined by the asymptote of the corresponding empirical transform $\mathfrak{G}_{Z_N}^{\mathcal B_L}$
\begin{align}
    G_Z^{\mathcal B_L}(S) 
    =
    \lim_{N\uparrow\infty}
    \mathfrak{G}_{Z_N}^{\mathcal B_L}(S),
    \label{eq:definition_operator_valued_stieltjes_transform}
\end{align}
with
\begin{align}
    \label{eq:definition_empirical_operator_valued_stieltjes_transform}
    \mathfrak{G}_{Z_N}^{\mathcal B_L}(S)
    = 
    \mathrm{Tr}_L\left[\left(Z_N-S\otimes \mathbb I_N\right)^{-1}\right]
    =
    -\sum_{n=0}^\infty \mathrm{Tr}_L\left[S^{-1}\left(Z_NS^{-1}\right)^n \right].
\end{align}
The power series does exist if $||S||_\infty>||Z_N||_\infty$, i.e. $(Z_N-S\otimes \mathbb I_N)^{-1}$ exists \cite{mingoFreeProbabilityRandom2017}.
Consider the operator-valued probability space $(\mathcal A, \mathcal E, \mathcal B_L)$, then the operator-valued Stieltjes transform on $\mathcal B_L$ is, furthermore, defined by a corresponding power series of the moments
\begin{align}
    G_Z^{\mathcal{B}_L}(S) = -\sum_{n=0}^\infty
    \mathcal E\left[S^{-1}\left(ZS^{-1}\right)^{n}\right],
\end{align}
where $\mathcal E:\mathcal A\to\mathcal B_L$ is the corresponding expectation operation.
Furthermore, denote the functional inverse of the operator-valued Stieltjes transform $G_Z^{\mathcal B_L}(S)$ as $K_Z^{\mathcal B_L}(W)$ and its empirical counterpart $\mathfrak K_{Z_N}^{\mathcal B_L}(W)$.
Based on the power series representation, it is straightforward to derive the following left and right scaling properties:
\begin{prp}[Left Scaling of Z]
    \label{prop:operator_valued_stieltjes_transform_left_scaling}
    Let $A\in \mathbb A^{L\times L}$ and its inverse exists, consider $G_Z^{\mathcal B_L}$ and its functional inverse $K_Z^{\mathcal B_L}$, then
    \begin{align}
        G^{\mathcal B_L}_{ZA}(S) &= A^{-1} G_Z^{\mathcal B_L}(SA^{-1}),
        \\
        K^{\mathcal B_L}_{ZA}(W) &= K_Z^{\mathcal B_L}(AW)A.
    \end{align}
\end{prp}
\begin{prp}[Right Scaling of Z]
    \label{prop:operator_valued_stieltjes_transform_right_scaling}
    Let $A\in \mathbb A^{L\times L}$ and its inverse exists, consider $G_Z^{\mathcal B_L}$ and its functional inverse $K_Z^{\mathcal B_L}$ as in Property~\ref{prop:operator_valued_stieltjes_transform_left_scaling}, then
    \begin{align}
        G^{\mathcal B_L}_{AZ}(S) &=  G^{\mathcal B_L}_Z(A^{-1}S)A^{-1},
        \\
        K^{\mathcal B_L}_{AZ}(W) &= AK^{\mathcal B_L}_Z(WA).
    \end{align}
\end{prp}
Consider the operator-valued probability space $(\mathcal A, \mathcal E_{\mathcal D}, \mathcal D_L)$.
The corresponding Stieltjes transform is given as
\begin{align}
    G_Z^{\mathcal{D}_L}(S) = -\sum_{n=0}^\infty
    \mathcal E_{\mathcal D}\left[S^{-1}\left(ZS^{-1}\right)^{n}\right],
\end{align}
with $S=\mathrm{diag}\left\{s_1,\ldots,s_L\right\}$.
If $Z$ is R-cyclic, then $G_Z^{\mathcal B_L}(S)=G_Z^{\mathcal D_L}(S)$ for all $S\in\mathcal{D}_L$ \cite{mingoFreeProbabilityRandom2017}, and \eqref{eq:definition_empirical_operator_valued_stieltjes_transform} converges to $G_Z^{\mathcal D_L}(S)$ with all off-diagonal traces tending to zero as $N\uparrow\infty$ if $S\in\mathcal D_L$.
Throughout this work we assume $Z$ to be R-cyclic and $S\in \mathbb R^{L\times L}$ to be a real, diagonal matrix. 
We refer to $G^{\mathcal D_L}_Z(S)$ as the $L$-conditional Stieltjes transform.
The existence of $G_Z^{\mathcal D_L}(S)$ is guaranteed for any $S$ in the complex-valued upper half-plane \cite{mingoFreeProbabilityRandom2017}.
The main results of this paper, however, consider the real plane $s_l\in\mathbb R ~\forall~l$.
Properties~\ref{prop:2conditional_stieltjes_transform_on_real_plane} and \ref{prop:Lconditional_stieltjes_transform_on_real_plane} characterize the existence of the operator-valued Stieltjes transform on $\mathcal D_2$ and $\mathcal D_L$ for any $L>2$, respectively.
For $L>2$ we define the domain by hierarchically splitting down to $\mathcal D_2$, and only consider the negative definite case, hence the two cases are separated into two properties.
\begin{prp}[Stieltjes transform of $\mathcal D_2$ on the Real Plane]
\label{prop:2conditional_stieltjes_transform_on_real_plane}
Consider the Stieltjes transform $G_Z^{\mathcal D_2}(S)$ on the algebra of diagonal $2\times 2$ matrices $\mathcal D_2$.
Let $S=\mathrm{diag}\{s_1,s_2\}\in\mathbb R^{2\times 2}$ and 
\begin{align*}
    Z_N = \begin{bmatrix}
        E_{11} & E_{12} 
        \\
        E_{12}^\dagger & E_{22}
    \end{bmatrix}
    = Z_N^\dagger,
\end{align*}
where the spectrum of $E_{ll}$ is bounded by $\lambda_\mathrm{min}(E_{ll})$ and $\lambda_\mathrm{max}(E_{ll})$ and the maximum singular value of $E_{12}$ is bounded by $||E_{12}||_\infty$ as $N\uparrow\infty$.
Furthermore, $Z_N$ converges to $Z$ in the sense of Definition~\ref{def:operator_valued_convergence}.
Then, the following statements hold:
\begin{enumerate}
    \item[(1)] The operator-valued Stieltjes transform exists on the domain $\mathcal S_{\mathcal{D}_2}^{Z}  = \mathcal S_{--}^Z \cup \mathcal S_{-+}^Z \cup \mathcal S_{+-}^Z\cup\mathcal S_{++}^Z$, with
    \begin{align*}
        \mathcal S_{--}^Z
        &= \Bigg\{ 
            \mathrm{diag}\{s_1,s_2\} \left| s_1,s_2\in\mathbb R \land
            s_2>\lambda_\mathrm{max} (E_{22}) +\frac{||E_{12}||_\infty^2}{s_1-\lambda_\mathrm{max}(E_{11})}
            \right. \land s_l>\lambda_\mathrm{max}(E_{ll})~\forall ~l\in\{1,2\}
        \Bigg\},
        \\
        \mathcal S_{-+}^Z
        &=\left\{ 
            \mathrm{diag}\{s_1,s_2\} \left| s_1,s_2\in\mathbb R \land
            s_1>\lambda_\mathrm{max}(E_{11}) \land s_2<\lambda_\mathrm{min}(E_{22})
            \right.
        \right\},
        \\
        \mathcal S_{+-}^Z
        &=\left\{ 
            \mathrm{diag}\{s_1,s_2\} \left| s_1,s_2\in\mathbb R \land
            s_1<\lambda_\mathrm{min}(E_{11}) \land s_2>\lambda_\mathrm{max}(E_{22})
            \right.
        \right\},
        \\
        \mathcal S_{++}^Z
        &=\Bigg\{ 
            \mathrm{diag}\{s_1,s_2\} \left| s_1,s_2\in\mathbb R \land
            s_2<\lambda_\mathrm{min} (E_{22}) +\frac{||E_{12}||_\infty^2}{s_1-\lambda_\mathrm{min}(E_{11})}
            \right.
            \land s_l<\lambda_\mathrm{min}(E_{ll})~\forall ~l\in\{1,2\}
        \Bigg\},
    \end{align*}
    as illustrated in Fig. \ref{fig:2ConditionalStieltjesOnRealPlane}.
    \begin{figure}
        \centering
        \begingroup%
        \makeatletter%
        \providecommand\color[2][]{%
            \errmessage{(Inkscape) Color is used for the text in Inkscape, but the package 'color.sty' is not loaded}%
            \renewcommand\color[2][]{}%
        }%
        \providecommand\transparent[1]{%
            \errmessage{(Inkscape) Transparency is used (non-zero) for the text in Inkscape, but the package 'transparent.sty' is not loaded}%
            \renewcommand\transparent[1]{}%
        }%
        \providecommand\rotatebox[2]{#2}%
        \newcommand*\fsize{\dimexpr\f@size pt\relax}%
        \newcommand*\lineheight[1]{\fontsize{\fsize}{#1\fsize}\selectfont}%
        \ifx\svgwidth\undefined%
            \setlength{\unitlength}{198.42521848bp}%
            \ifx\svgscale\undefined%
            \relax%
            \else%
            \setlength{\unitlength}{\unitlength * \real{\svgscale}}%
            \fi%
        \else%
            \setlength{\unitlength}{\svgwidth}%
        \fi%
        \global\let\svgwidth\undefined%
        \global\let\svgscale\undefined%
        \makeatother%
        \begin{picture}(1,1.04564307)%
            \lineheight{1}%
            \setlength\tabcolsep{0pt}%
            \put(0,0){\includegraphics[width=\unitlength,page=1]{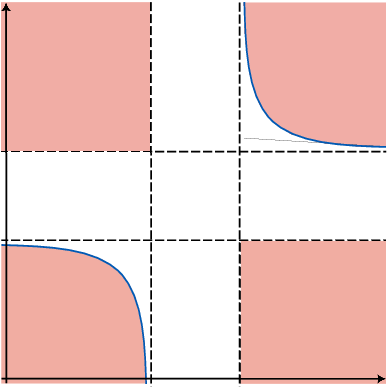}}%
            \put(0.75088997,0.24232938){\color[rgb]{0.28235294,0.38823529,1}\makebox(0,0)[lt]{\lineheight{1.25}\smash{\begin{tabular}[t]{l}$$\end{tabular}}}}%
            \put(0.21515514,0.19753667){\color[rgb]{0,0,0}\makebox(0,0)[t]{\lineheight{1.25}\smash{\begin{tabular}[t]{c}$\mathcal S_{++}^Z$\end{tabular}}}}%
            \put(0.7907178,0.77218914){\color[rgb]{0,0,0}\makebox(0,0)[t]{\lineheight{1.25}\smash{\begin{tabular}[t]{c}$\mathcal S_{--}^Z$\end{tabular}}}}%
            \put(0.21519318,0.76866425){\color[rgb]{0,0,0}\makebox(0,0)[t]{\lineheight{1.25}\smash{\begin{tabular}[t]{c}$\mathcal S_{+-}^Z$\end{tabular}}}}%
            \put(0.78910146,0.19917546){\color[rgb]{0,0,0}\makebox(0,0)[t]{\lineheight{1.25}\smash{\begin{tabular}[t]{c}$\mathcal S_{-+}^Z$\end{tabular}}}}%
            \put(0.05051479,1.00266508){\color[rgb]{0,0,0}\makebox(0,0)[t]{\lineheight{1.25}\smash{\begin{tabular}[t]{c}$s_2$\end{tabular}}}}%
            \put(0.39086483,1.01253195){\color[rgb]{0,0,0}\makebox(0,0)[t]{\lineheight{1.25}\smash{\begin{tabular}[t]{c}$\lambda_\mathrm{min}(E_{11})$\end{tabular}}}}%
            \put(0.62512948,1.01253195){\color[rgb]{0,0,0}\makebox(0,0)[t]{\lineheight{1.25}\smash{\begin{tabular}[t]{c}$\lambda_\mathrm{max}(E_{11})$\end{tabular}}}}%
            \put(-0.001,0.37316073){\color[rgb]{0,0,0}\makebox(0,0)[rt]{\lineheight{1.25}\smash{\begin{tabular}[t]{r}$\lambda_\mathrm{min}(E_{22})$\end{tabular}}}}%
            \put(-0.001,0.58742544){\color[rgb]{0,0,0}\makebox(0,0)[rt]{\lineheight{1.25}\smash{\begin{tabular}[t]{r}$\lambda_\mathrm{max}(E_{22})$\end{tabular}}}}%
            \put(1.0067653,0.05135519){\color[rgb]{0,0,0}\makebox(0,0)[lt]{\lineheight{1.25}\smash{\begin{tabular}[t]{l}$s_1$\end{tabular}}}}%
            \put(0.99803178,0.60996245){\color[rgb]{0,0.35294,0.7098}\makebox(0,0)[lt]{\lineheight{1.25}\smash{\begin{tabular}[t]{l}$s_2 = \lambda_\mathrm{max}(E_{22})$\\~~~~~~$\displaystyle+\frac{||E_{12}||_\infty^2}{s_1-\lambda_\mathrm{max}(E_{11})}$\end{tabular}}}}%
            \put(0.35522621,-0.07088314){\color[rgb]{0,0.35294,0.7098}\makebox(0,0)[lt]{\lineheight{1.25}\smash{\begin{tabular}[t]{l}$\displaystyle s_2 = \lambda_\mathrm{min}(E_{22})+\frac{||E_{12}||_\infty^2}{s_1-\lambda_\mathrm{min}(E_{11})}$\end{tabular}}}}%
        \end{picture}%
        \endgroup%
        \vspace*{10mm}
        \caption{Domain of the operator-valued Stieltjes transform of $Z$ on the algebra of real, diagonal $2\times 2$ matrices $\mathcal D_2$}
        \label{fig:2ConditionalStieltjesOnRealPlane}
    \end{figure}
    \item[(2)] $Z-S\otimes \mathbb I\prec 0$ for all $S\in\mathcal S_{--}^Z$ and $Z-S\otimes \mathbb I \succ 0$ for all $S\in\mathcal S_{++}^Z$.
    \item[(3)] The subdomains $\mathcal S_{pm}^Z$ with $p,m\in\{+,-\}$ do not overlap.
    \item[(4)] For $S\in \mathcal S_{pm}^Z$ the sign of $(G_{Z}(S))_{11}$ and $(G_Z(S))_{22}$ is $p$ and $m$, respectively.
    \item[(5)] For $S \in \mathcal S_{\mathcal D_2}^Z$, $(G_Z(S))_{ll}$ is increasing for $s_1$ and $s_2$ increasing for all $l\in\{1,2\}$.
    \item[(6)] $G_Z^{\mathcal D_2}(S)$ is bijective on $\mathcal S_{--}^Z \cup \mathcal S_{++}^Z$, hence the inverse with respect to composition $K_Z^{\mathcal D_2}(W)$ exists for all $W\in\mathcal W_{--}^Z\cup \mathcal W_{++}^Z$, with
    \begin{align*}
        \mathcal W_{pp}^Z = \left\{\left.
            G_Z^{\mathcal D_2}(S)\right|S\in \mathcal S_{pp}^Z
        \right\}, \text{ for }p\in\{\pm\},
    \end{align*}
\end{enumerate}
where $(X)_{lk}$ identifies element $(l,k)$ of matrix $X$.
All properties hold by extension for the empirical Stieltjes transform $\mathfrak{G}_{Z_N}^{\mathcal D_2}$ and its functional inverse $\mathfrak{K}_{Z_N}^{\mathcal D_2}$ for $N>N_0$, assuming that all required operator norms and minimum and maximum eigenvalues are finitely bounded for all $N>N_0$ and some $N_0>0$.
\end{prp}
Note that the existence of a functional inverse on $\mathcal{S}_{pm}^{Z}$ for $p\neq m$ is not guaranteed. 
In fact, a simple counterexample can be constructed by $E_{11}=E_{22}=0$ and $E_{12}=E_{21}=\mathbb I_N$, such that $G_Z^{\mathcal D_2}(S)=\mathrm{diag}\left\{-2/5, 2/5\right\}$ for both $S=\mathrm{diag}\{1/2, -1/2\}$ and $S=\mathrm{diag}\{2, -2\}$.
This effect occurs because $Z-S\otimes \mathbb I_N$ is non-definite on $\mathcal{S}_{pm}^{Z}$ for $p\neq m$, i.e. the Jacobian of $G_Z^{\mathcal D_2}(S)$ is non-definite, as discussed in the proof of Property~\ref{prop:2conditional_stieltjes_transform_on_real_plane}.
Theorems~\ref{thm:pL_rank1_integral}-\ref{thm:pL_rankM_integral_modification} only consider $p=m$, such that this is not an issue for our work.
A more precise characterization of the operator-valued Stieltjes transform on the complete domain is subject to future research.
For $L>2$ the definition of the domain $\mathcal S_{\mathcal D_L}^Z$ is more challenging. 
Hence, only the negative subdomain $\mathcal S_{\{-\}^L}^Z$, with $\mathcal S_{--}^Z=\mathcal S_{\{-\}^2}^Z$, is characterized for any $L>2$.
\begin{prp}[Stieltejes Transform of $\mathcal D_L$ on the Real Plane for $L>2$]
\label{prop:Lconditional_stieltjes_transform_on_real_plane}
    Let $Z_N$ be defined as in Definition~\ref{def:operator_valued_convergence}, and $Z_N$ converges to $Z$. 
    The spectra of $E_{ll}$ and $Z$ are bound by their respective minimum and maximum eigenvalues $\lambda_\mathrm{min}(\cdot)$ and $\lambda_\mathrm{max}(\cdot)$, the required operator norms $||\cdot||_\infty$ are finite as $N\uparrow\infty$.
    Let $Z_{\setminus l}\in \mathbb R^{(L-1)N\times(L-1)N}$ be the matrix which is constructed by omitting the $l^\mathrm{th}$ column and row of $Z_N$, $Q_l\in\mathbb R^{(L-1)N\times N}$ the $l^\mathrm{th}$ block-column of $Z_N$ in which the $l^\mathrm{th}$ block is omitted, and $S_{\setminus l}\in \mathbb R^{(L-1)\times(L-1)}$ the sub-matrix of $S$ where the $l^\mathrm{th}$ column and row are omitted.
    Furthermore, define $\hat Z^{(l)}$ as
    \begin{align}
        \hat Z^{(l)} = Z_{\setminus l} + Q_l (s_l \mathbb I_N -E_{ll})^{-1} Q_l^T.
    \end{align}
    Note that we dropped the subscript $N$ for brevity and use the notation for both the finite dimensional matrices and the ones as $N\uparrow\infty$. 
    Then the following statements hold:
    \begin{enumerate}
        \item[(1)] The \(L\)-conditional Stieltjes transform exists on the domain
        \begin{align*}
            \mathcal S_{\{-\}^L}^Z 
            =
                \Bigg\{&
                    \mathrm{diag}\{s_1,\ldots,s_L\}
                \Bigg |
                    s_l\in\mathbb R \land
                    s_l 
                        > 
                            \lambda_\mathrm{max}(E_{ll}) 
                            + 
                            ||Q_l||^2_\infty 
                            \left|\left|
                                Z_{\setminus l}
                                - 
                                S_{\setminus l}
                                \otimes 
                                \mathbb I
                            \right|\right|^{-1}_\infty
                    \land ~S_{\setminus l} \in  \mathcal S_{\{-\}^{L-1}}^{\hat Z^{(l)}}
                \Bigg\}.
        \end{align*}
        \item[(2)] For $S\in\mathcal S_{\{-\}^L}^Z$, $Z-S\otimes \mathbb I\prec 0$ is negative definite and, hence, the $L$-conditional Stieltjes transform is negative definite $G_Z^{\mathcal D_L}(S)\prec 0$. 
        \item[(3)] $G_Z^{\mathcal D_L}(S)$ is increasing with $s_l$ increasing for any $l$.
        \item[(4)] $G_Z^{\mathcal D_L}(S)$ is bijective on $S_{\{-\}^L}^Z$, hence the inverse with respect to composition $K_Z^{\mathcal D_L}(W)$ exists for all $W \in\mathcal W_{\{-\}^L}^Z$, with
        \begin{align*}
            \mathcal W_{\{-\}^L}^Z = \left\{
                \left.
                G_Z^{\mathcal D_L}(S) \right| S\in\mathcal S_{\{-\}^L}^Z
            \right\}.
        \end{align*}
    \end{enumerate}
    Property~\ref{prop:Lconditional_stieltjes_transform_on_real_plane} holds by extension for the empirical transform $\mathfrak{G}_{Z_N}^{\mathcal D_L}$ and its functional inverse $\mathfrak{K}_{Z_N}^{\mathcal D_L}$ for $N>N_0$, assuming that the required operator norms and minimum and maximum eigenvalues are finitely bounded for all $N>N_0$ and some $N_0>0$.
\end{prp}
The proofs of Properties~\ref{prop:2conditional_stieltjes_transform_on_real_plane} and \ref{prop:Lconditional_stieltjes_transform_on_real_plane} are provided in Section~\ref{4_proofs}.

\subsection{Operator-Valued R-transform}
The operator-valued R-transform on the algebra of $L\times L$ diagonal, real matrices is defined by
\begin{align}
    R_Z^{\mathcal D_L}(W) = K_Z^{\mathcal{D}_L}(-W)-W^{-1}~\forall~-W\in \mathcal W_{\{-\}^L}^{Z}.
\end{align}
Alternatively, the R-transform is expressed by a power series of operator-valued cumulants \cite{mingoFreeProbabilityRandom2017}, as
\begin{align}
    R_Z^{\mathcal D_L}(W) = \sum_{n=1}^\infty
        \mathcal \kappa^{\mathcal D_L}_n(ZW, \ldots, ZW, Z).
\end{align}
From this we can directly see that if $R_Z^{\mathcal D_L}(W)$ exists, $R_Z^{\mathcal D_L}(wW)~\forall w\in[0,1)$ exists, since the power series still converges.
The empirical operator-valued R-transform is defined accordingly through the functional inverse of ${\mathfrak{G}_{Z_N}^{\mathcal D_L}}$, which we assume to exist for large enough $N$, as
\begin{align}
    \mathfrak{R}_{Z_N}^{\mathcal D_L}(W) = \mathfrak{K}_{Z_N}^{\mathcal D_L}(-W)-W^{-1}~\forall~-W\in\mathcal W_{\{-\}^L}^{Z}.
\end{align}

\section{Proofs}\label{4_proofs}
The proofs of Properties \ref{prop:operator_valued_stieltjes_transform_left_scaling} and \ref{prop:operator_valued_stieltjes_transform_right_scaling} are straightforward based on the power series definition of the operator-valued Stieltjes transform, and hence are omitted for brevity.

\begin{proof}[Proof of Property~\upshape{\ref{prop:2conditional_stieltjes_transform_on_real_plane}}]
    Consider the block matrix $Z_N-S\otimes \mathbb I_N$ with $S=\mathrm{diag}\{s_1,s_2\}$ and let $N>N_0$ with $N_0>0$ such that the minimum (maximum) eigenvalue of $E_{ll}$ is finitely bounded by $\lambda_\mathrm{min}(E_{ll})$ ($\lambda_\mathrm{max}(E_{ll})$), and the maximum singular value of $E_{lk}$ is finitely bounded by $||E_{lk}||_\infty$. The block matrix inverse is given by
    \begin{align}
        (Z_N-S\otimes \mathbb I_N)^{-1}
        =
        \begin{bmatrix}
            E_{11}-s_1\mathbb I_N & E_{12} \\
            E_{12}^\dagger & E_{22}-s_2\mathbb I_N
        \end{bmatrix}^{-1}
        =
        \begin{bmatrix}
            \left(\tilde E_{11} - E_{12}{\tilde E_{22}}^{-1}E_{21}\right)^{-1} & \square_{N\times N} \\
            \square_{N\times N} & \left(\tilde E_{22} - E_{21}{\tilde E_{11}}^{-1}E_{12}\right)^{-1}
        \end{bmatrix},
        \label{eq:proof_prop_5_block_mat_inverse}
    \end{align}
    where $\square_{n\times m}$ denotes any matrix of size $n\times m$, $\tilde{E}_{ll} = E_{ll}-s_l\mathbb I_N$, and assuming that $\tilde E_{ll}$ is invertible, i.e. the empirical scalar Stieltjes transform $\mathfrak G_{E_{ll}}(s_l)$ exists for all $l\in\{1,2\}$.
    We consider the 2-conditional Stieltjes transform on $\mathcal D_2$, and hence ignore the off-diagonal entries of the inverse matrix.
    The transform exists if the inverse on the diagonal entries exists.
    We, therefore, investigate the following four cases:
    \begin{enumerate}
        \item[(c 1)] $s_1>\lambda_\mathrm{max}(E_{11})$ and $s_2>\lambda_\mathrm{max}(E_{22})$,
        \item[(c 2)] $s_1>\lambda_\mathrm{max}(E_{11})$ and $s_2<\lambda_\mathrm{min}(E_{22})$,
        \item[(c 3)] $s_1<\lambda_\mathrm{min}(E_{11})$ and $s_2>\lambda_\mathrm{max}(E_{22})$,
        \item[(c 4)] $s_1<\lambda_\mathrm{min}(E_{11})$ and $s_2<\lambda_\mathrm{min}(E_{22})$.
    \end{enumerate}
    Consider case (c 1), $\tilde{E}_{22}^{-1}$ is negative definite, such that a decomposition $\tilde{E}_{22}^{-1}=-B^\dagger B$ exists which shows that $-E_{12}\tilde E_{22}^{-1} E_{21}$ is positive semi-definite.
    The maximum eigenvalue of this matrix product is given by $\lambda_\mathrm{max}\left(-E_{12}\tilde E_{22}^{-1} E_{21}\right)\leq\frac{||E_{12}||_\infty^2}{s_2-\lambda_\mathrm{max}(E_{22})}$.
    We conclude that the inverse of the first diagonal entry exists if $s_1>\lambda_\mathrm{max}(E_{11}) +\frac{||E_{12}||_\infty^2}{s_2-\lambda_\mathrm{max}(E_{22})}$.
    The inverse of the second diagonal entry exists if $s_2>\lambda_\mathrm{max}(E_{22}) +\frac{||E_{12}||_\infty^2}{s_1-\lambda_\mathrm{max}(E_{11})}$, by similar argument.
    It is straightforward to see that the two conditions are identical, and the Stieltjes transform exists for $S\in \mathcal S_{--}^Z$.
    In case (c 2), $\tilde{E}_{22}^{-1}$ is positive definite, and $-E_{12}\tilde E_{22}^{-1} E_{21}$ negative semi-definite.
    The maximum eigenvalue of $E_{11}-E_{12}\tilde E_{22}^{-1}E_{21}$ is, therefore, upper bounded by $\lambda_\mathrm{max}(E_{11})$.
    Similarly, the minimum eigenvalue of $E_{22}-E_{21}\tilde E_{11}^{-1}E_{12}$ is lower bounded by $\lambda_\mathrm{min}(E_{22})$.
    The Stieltjes transform exists for $S\in \mathcal S_{-+}^Z$, and similarly also for $S\in \mathcal S_{+-}^Z$.
    Finally, in case (c 4), $\tilde{E}_{22}^{-1}$ is positive definite, and $-E_{12}\tilde E_{22}^{-1} E_{21}$ negative semi-definite.
    We obtain the bound
    \begin{align}
        \lambda_\mathrm{min}\left(E_{11}-E_{12}\tilde E_{22}^{-1}E_{21}\right)
        \geq 
        \lambda_\mathrm{min}(E_{11})
        +
        \frac{||E_{12}||_\infty^2}{
            s_2-\lambda_\mathrm{min}(E_{22})
        },
    \end{align}
    which shows that the Stieltjes transform exists for $S\in \mathcal S_{++}^Z$.
    By construction, the subdomains $\mathcal S_{pm}^Z$ with $p,m\in\{+,-\}$ do not overlap.
    For $S\in \mathcal S_{pm}^Z$ the sign of $[G_Z(S)]_{11}$ and $[G_Z(S)]_{22}$ is $p$ and $m$, respectively, since the partial trace of order 2 of $(Z_N-S\otimes \mathbb I_N)^{-1}$ produces scalar, empirical Stieltjes transforms on the on-diagonals for which the sign is determined by Property \ref{prop:scalar_stieltjes_transform_monotonic_increasing}.
    This also proves item $(4)$ of Property \ref{prop:2conditional_stieltjes_transform_on_real_plane} since the scalar Stieltjes transforms on the on-diagonals are increasing for $s_1$ and $s_2$ increasing.
    All of the above holds as $N\uparrow\infty$, i.e. for $G_Z^{\mathcal D_2}$.
    \\
    Finally, we show that $G_Z^{\mathcal D_2}$ is bijective on $S_{pp}^Z$ for $p\in\{\pm\}$.
    Let $S\in\mathcal S_{pp}^Z$, and consider the region in which $\left(\mathfrak G_{Z_N}^{\mathcal D_2}\right)_{22}=g_2$ is constant.
    This region is a contour on $\mathcal S_{pp}^Z$ since increasing $s_1$ increases $\left(\mathfrak G_{Z_N}^{\mathcal D_2}\right)_{22}$, such that $s_2$ has to be decreased to return to the constant value.
    There can not be multiple solutions for the corresponding $s_2$ since the trace of the lower principal block in \eqref{eq:proof_prop_5_block_mat_inverse} is an empirical scalar Stieltjes transform, endowed with a functional inverse.
    In the following, we show that $\left(\mathfrak G_{Z_N}^{\mathcal D_2}\right)_{11}$ is strictly increasing with $s_1$ increasing for any contour $g_2=\mathrm{const}$ for all $N>N_0$, and conclude that $G_Z^{\mathcal D_2}$ is, therefore, bijective since  $G_Z^{\mathcal D_2}(S')=G_Z^{\mathcal D_2}(S'')$ if and only if $S'=S''$ on $\mathcal S_{pp}^Z$.
    The following derivations assume a generic $L\geq 2$.
    \\
    Consider the $L$-conditional Stieltjes transform as the map \begin{align}
        g: 
            (s_1,\ldots, s_L)
            \to
            \left[
                \left(\mathfrak G_{Z_N}^{\mathcal D_L}(S)\right)_{11},
                \ldots,
                \left(\mathfrak G_{Z_N}^{\mathcal D_L}(S)\right)_{LL}
            \right].
    \end{align}
    An infinitesimal change in the first-on diagonal of the $L$-conditional Stieltjes transform is given by
    \begin{align}
        \mathrm d \left(\mathfrak G_{Z_N}^{\mathcal D_L}(S)\right)_{11} 
        =
        \mathrm d g_1
        =
        \frac{\partial g_1 }{\partial s_1} \mathrm d s_1
        +
        \sum_{l\geq 2}
            \frac{\partial g_1 }{\partial s_l} \mathrm d s_l
        \label{eq:proof_prop_6_infinitesimal_change},
    \end{align}
    where $g_l$ denotes the $l^\mathrm{th}$ element of $g$.
    Let $[X]_{lk}$ denote block $(l,k)$ of the block matrix $X$, and $||X||_\mathrm{F}$ is the Frobenius norm of matrix $X$.
    The partial derivatives in \eqref{eq:proof_prop_6_infinitesimal_change} are elements of the Jacobian $J(S)$ of $g$, which is characterized by the following lemma.
    \begin{lemma}[Jacobian of $g$]
        \label{lem:jacboian_of_g}
        $J(S)=(J_{lk})_{l,k=1}^L$ has strictly positive elements and is positive definite on $\mathcal S_{\{p\}^L}^{Z}$ for $p\in\{\pm\}$, with 
        \begin{align}
            J_{lk} 
            =  
            \frac{1}{N}
            \left|\left|
                [(Z_N-S\otimes \mathbb I_N)^{-1}]_{lk}
            \right|\right|_\mathrm{F}^2.
        \end{align}
    \end{lemma}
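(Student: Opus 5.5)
Write $R(S)=(Z_N-S\otimes\mathbb I_N)^{-1}$, which is self-adjoint for real diagonal $S$. The plan is to compute the derivative of $R(S)$ with respect to each $s_k$ directly. Then use the sign definiteness of $Z_N-S\otimes\mathbb I_N$ on $\mathcal S_{\{p\}^L}^Z$ to get positivity of the entries, and a Schur-product argument to get positive definiteness.

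\emph{Computing the entries.} Since $\partial_{s_k}(Z_N-S\otimes\mathbb I_N)=-e_ke_k^T\otimes\mathbb I_N$, the resolvent identity gives
\begin{align*}
\partial_{s_k}R(S)=R(S)\,(e_ke_k^T\otimes\mathbb I_N)\,R(S).
\end{align*}
Taking block $(l,l)$ and the normalized trace,
\begin{align*}
J_{lk}=\partial_{s_k}g_l=\frac1N\mathrm{tr}\big([R]_{lk}[R]_{kl}\big).
\end{align*}
Because $R=R^\dagger$ we have $[R]_{kl}=[R]_{lk}^\dagger$, so $J_{lk}=\frac1N\|[R]_{lk}\|_{\mathrm F}^2$. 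This is the claimed formula. It also shows $J$ is symmetric and entrywise nonnegative.

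\emph{Strict positivity of the entries.} For the diagonal entries, $[R]_{ll}$ is invertible: by Property~\ref{prop:Lconditional_stieltjes_transform_on_real_plane}(2) (or Property~\ref{prop:2conditional_stieltjes_transform_on_real_plane}(2) when $L=2$), $R$ is definite on $\mathcal S_{\{p\}^L}^Z$. Hence $[R]_{ll}$ is nonzero and $J_{ll}>0$.

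The off-diagonal entries need $[R]_{lk}\neq0$. I would express this block through a Schur complement, $[R]_{lk}=-\tilde E_{ll}^{-1}[Z_N]_{l,\cdot}(\ldots)$. Here the main obstacle appears: if the coupling blocks vanish (e.g. $E_{lk}=0$ for all $l\ne k$), then $J$ is diagonal and the off-diagonal entries are zero. So "strictly positive elements" can only hold under a coupling/irreducibility assumption. I would state that assumption explicitly: the graph with edges $\{(l,k):E_{lk}\neq0\}$ is connected.

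Under connectivity, I would argue as follows. Since $R$ is definite, a Neumann-type expansion of $R$ around the block-diagonal part produces, in $[R]_{lk}$, a leading contribution along a path of nonzero couplings, and cancellation cannot make the whole block vanish. I expect this step to require the most care. In fact only nonnegativity, together with positive diagonal entries, is needed downstream for monotonicity.

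\emph{Positive definiteness.} For $x\in\mathbb R^L$,
\begin{align*}
x^TJx=\frac1N\sum_{l,k}x_lx_k\,\mathrm{tr}\big([R]_{lk}[R]_{kl}\big)=\frac1N\mathrm{tr}\big(R X R X\big),\qquad X=\mathrm{diag}(x)\otimes\mathbb I_N.
\end{align*}
Since $R$ is self-adjoint, $RXRX=(RX)(RX)$ and $\mathrm{tr}\big((RX)^2\big)=\mathrm{tr}\big((R^{1/2}XR^{1/2})^2\big)$ when $R\succ0$. For $R\prec0$, write $R=-C$ with $C\succ0$; the sign cancels because $R$ appears twice.

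The matrix $R^{1/2}XR^{1/2}$ is self-adjoint, so
\begin{align*}
\mathrm{tr}\big((R^{1/2}XR^{1/2})^2\big)=\big\|R^{1/2}XR^{1/2}\big\|_{\mathrm F}^2\ge0.
\end{align*}
Equality forces $R^{1/2}XR^{1/2}=0$, hence $X=0$ because $R$ is invertible, hence $x=0$. Therefore $J\succ0$ on $\mathcal S_{\{p\}^L}^Z$. Equivalently, $J$ is the Gram matrix $\big(\langle A_l,A_k\rangle\big)$ with $A_l=R^{1/2}(e_le_l^T\otimes\mathbb I)R^{1/2}$, and these $A_l$ are linearly independent.

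Finally, the bound holds uniformly for $N>N_0$ by the empirical versions of the cited properties. That is what the bijectivity argument in Property~\ref{prop:2conditional_stieltjes_transform_on_real_plane}(6) and Property~\ref{prop:Lconditional_stieltjes_transform_on_real_plane}(4) requires.
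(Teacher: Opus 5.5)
Your computation of $J_{lk}=\frac1N\|[R]_{lk}\|_{\mathrm F}^2$, your proof that $J_{ll}>0$, and your proof that $J\succ0$ via $x^TJx=\frac1N\|R^{1/2}XR^{1/2}\|_{\mathrm F}^2$ (using $-R$ on the negative domain) are correct. This is the paper's argument in basis-free form. The paper expands $R=U\Lambda_AU^\dagger$ and writes $J_{lk}=\frac1N w_l^\dagger w_k$, where $w_l$ vectorizes $\big(\sqrt{\lambda_{A,i}\lambda_{A,j}}\,u_{il}^\dagger u_{jl}\big)_{i,j}$. That matrix is your $R^{1/2}(e_le_l^T\otimes\mathbb I_N)R^{1/2}$ written in the eigenbasis, and linear independence follows from orthonormality of the $u_i$. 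You are also right that ``strictly positive elements'' fails for off-diagonal entries without coupling. The paper's own proof only establishes $J_{lk}\ge0$.

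The gap is your proposed repair. Connectivity of the graph $\{(l,k):E_{lk}\neq0\}$ does not force $[R]_{lk}\neq0$ once $L\ge3$. The ``leading contribution along a path'' is a product of blocks, and a product of nonzero matrices can vanish, together with every higher-order term.

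Here is a counterexample. Take $L=3$, $N$ even, $E_{ll}=0$, $E_{13}=E_{31}=0$, $E_{12}=E_{21}=\epsilon\,\mathrm{diag}\{1,0,1,0,\ldots\}$ and $E_{23}=E_{32}=\epsilon\,\mathrm{diag}\{0,1,0,1,\ldots\}$. Let $S=s\,\mathbb I_3$ with $s$ large, so that $S\in\mathcal S^Z_{\{-\}^3}$ and $Z_N-S\otimes\mathbb I_N\prec0$. The coupling graph is the path $1$--$2$--$3$, which is connected. But all blocks are diagonal, so the inverse decouples coordinatewise into $3\times3$ problems. In odd coordinates block $3$ is isolated; in even coordinates block $1$ is isolated. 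Hence $[R]_{13}=0$ and $J_{13}=0$ for every $N$. In your Neumann expansion, each $[Z_N^n]_{13}$ with $n\ge2$ equals $E_{12}XE_{23}$ for some diagonal $X$, which is zero.

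So that paragraph should be dropped rather than patched. Two things do hold:
\begin{itemize}
\item For $L=2$, the block-inverse formula $[R]_{12}=-\tilde E_{11}^{-1}E_{12}\big(\tilde E_{22}-E_{21}\tilde E_{11}^{-1}E_{12}\big)^{-1}$ shows $J_{12}>0$ if and only if $E_{12}\neq0$.
\item For general $L$, the provable statement is $J_{lk}\ge0$, $J_{ll}>0$ and $J\succ0$.
\end{itemize}
As you observe, the second statement is all the contour argument uses: it needs $1/(J(S)^{-1})_{11}>0$ plus non-decreasing monotonicity.
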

    \begin{proof}[Proof of Lemma~\upshape{\ref{lem:jacboian_of_g}}]
        Define $A=(Z_N-S\otimes \mathbb I_N)^{-1}=(A_{lk})_{l,k=1}^L$ with $A_{lk}\in\mathbb R^{N\times N}$.
        The derivative of $A$ with respect to $s_k$ is
        \begin{align}
            \frac{\partial A}{\partial s_k}
            =
            A(e_ke_k^T\otimes \mathbb I_N)A,
            \label{eq:proof_prop_6_derivative_trace_arg_s_l}
        \end{align}
        where $e_k$ is the $k^\mathrm{th}$ standard basis vector.
        Block $(l,l)$ of \eqref{eq:proof_prop_6_derivative_trace_arg_s_l} is, hence, given by $A_{lk}A_{kl}$, such that
        \begin{align}
            J_{lk}
            =
            \mathrm{Tr}\left[
                \left[\frac{\partial A}{\partial s_k }\right]_{ll}
            \right]
            =
            \mathrm{Tr}\left[
                A_{lk}
                A_{kl}
            \right]
            =
            \frac{1}{N}
            \left|\left|
                [(Z_N-S\otimes \mathbb I_N)^{-1}]_{lk}
            \right|\right|_\mathrm{F}^2
            \geq0. 
        \end{align}
        It remains to show that $J\succ0$ on the domain $\mathcal S^Z_{\{p\}^L}$. 
        Consider the eigendecomposition $A=U\Lambda_AU^\dagger$ with $\lambda_{A,i}<0$ ($\lambda_{A,i}>0$) for all $i$ if $S\in\mathcal S_{\{-\}^L}^{Z}$ ($S\in\mathcal S_{\{+\}^L}^{Z}$), where $\lambda_{X,i}$ is the $i^\mathrm{th}$ eigenvalue of a square matrix $X$, and $U=[u_{1}, \ldots, u_{LN}]$ with $u_{i}\in\mathbb R^{LN}$.
        Partition every $u_i$ in vectors $u_{il}$ of size $N$ by $u_i=[u_{i1}^\dagger,\ldots,u_{iL}^\dagger]^\dagger$ with $l\in [L]$. 
        Block $(l,k)$ of A is then given by
        \begin{align}
            A_{lk} = \sum_{i=1}^{LN}
                \lambda_{A,i} u_{il}u_{ik}^\dagger.
        \end{align}
        Considering $\mathrm{tr}(A_{lk}A_{kl})=||A_{lk}||_F^2$, and by applying the cyclic property of the trace we obtain
        \begin{align}
            J_{lk}
            =
            \frac{1}{N}||A_{lk}||_F^2
            =
            \frac{1}{N}
            \sum_{i,j=1}^L 
                \lambda_{A,i} \lambda_{A,j}
                (u_{il}^\dagger u_{jl})^*
                (u_{ik}^\dagger u_{jk})
            = 
            \frac{1}{N}
            w_{l}^\dagger w_k,
            \label{eq:proof_prop_6_B_is_gram_matrix}
        \end{align}
        with $w_l = \mathrm{vect}\left(\left(\sqrt{\lambda_{A,i} \lambda_{A,j}}u_{il}^\dagger u_{jl}\right)_{i,j=1}^L\right)$, where $\mathrm{vect}(\cdot)$ vectorizes a matrix, and the square root returns a real value since A is a definite matrix.
        By \eqref{eq:proof_prop_6_B_is_gram_matrix} $J(S)$ is Gramian and hence necessarily positive semi-definite. 
        If all $w_l$ are linearly independent, $J(S)$ is positive definite. 
        We proof that this is fulfilled, by showing that the linear combination of all $w_l$ is zero if and only if all linear combining coefficients are zero.
        This is equivalent to the condition
        \begin{align}
            \sum_{l=1}^L \alpha_l u_{il}^\dagger u_{jl} 
            =
            u_i^\dagger (\mathrm{diag}\left\{\alpha_1,\ldots, \alpha_L\right\}\otimes \mathbb I_N) u_j
            \stackrel{?}{=}
            0, \text{for all }(i,j)\in [LN]^2.
            \label{eq:proof_prop_6_condition_linear_dependent}
        \end{align}
        Since $\{u_i\}$ is an orthonormal basis of $\mathbb R^{LN}$, \eqref{eq:proof_prop_6_condition_linear_dependent} can not be true unless $\alpha_l=0~\forall~l$ and, hence, the Jacobian is positive definite.
    \end{proof}
    The contour is defined by the configurations of $S$ on which the last $L-1$ elements of $g$, $g_{2:L} = [g_{2}, \ldots, g_{L}]$ are constant, i.e. 
    \begin{align}
        \mathrm d g_{2:L} 
        =
        J_{2:L,2:L} \mathrm d s_{2:L} + J_{2:L,1}\mathrm d s_1 
        = 0,
        \label{eq:proof_prop_6_change_on_contour_condition}
    \end{align}
    where $s_{2:L} = [s_2,\ldots, s_L]^T$, and  $J_{l:k,h:j} = (J(S))_{l:k,h:j}$ isolates the sub-matrix of $J(S)$ comprised of all elements with indices ranging from $(l,h)$ to $(k,j)$ for $l\leq k$ and $h\leq j$.
    By straightforward reformulation of \eqref{eq:proof_prop_6_change_on_contour_condition} and substitution in \eqref{eq:proof_prop_6_infinitesimal_change}, the derivative of $g_1$ on the contour is given as
    \begin{align}
        \left.
        \frac{
            \mathrm d \left(\mathfrak G_{Z_N}^{\mathcal D_L}(S)\right)_{11} 
        }{
            \mathrm d s_1
        }
        \right|_{\mathrm d g_{2:L}=0}
        =
        J_{11}
        - J_{1,2:L}J_{2:L,2:L}^{-1}J_{2:L,1},
        = 
        \frac{1}{(J(S)^{-1})_{11}}
        >0,
    \end{align}
    where the inequality follows from the fact that $J(S)$ is positive definite. 
    The same holds by symmetry for any other contour on which $L-1$ on-diagonal values of $\mathfrak G_{Z_N}^{\mathcal D_L}$ are constant. 
    
\end{proof}

\begin{proof}[Proof of Property~\upshape{\ref{prop:Lconditional_stieltjes_transform_on_real_plane}}]
    The proof follows by induction. Assume, that all items of Property~\ref{prop:Lconditional_stieltjes_transform_on_real_plane} hold for $L-1$.
    In order to derive the bounds, we analyze the $l^\mathrm{th}$ principal block of size $N\times N$ of $(Z_N-S\otimes \mathbb I_N)^{-1}$ for $N>N_0$, assuming that all required operator norms and minimum and maximum eigenvalues are finitely bounded for all $N>N_0$ and some $N_0>0$. 
    Let $R_l$ be the $L\times L$ block matrix which moves the $l^\mathrm{th}$ block row up to the first row such that
    \begin{align}
        R_l (Z_N-S\otimes \mathbb I_N) R_l^T 
        =
        \begin{bmatrix}
            E_{ll} - s_l\mathbb I_N & Q_l^T \\
            Q_l & Z_{\setminus l} - S_{\setminus l}\otimes\mathbb I_N
        \end{bmatrix}.
        \label{eq:proof_property_6_block_matrix_reordered}
    \end{align}
    The $l^\mathrm{th}$ principal block of $(Z_N-S\otimes \mathbb I_N)^{-1}$ is, hence, given by 
    \begin{align}
        \left[(Z_N-S\otimes \mathbb I_N)^{-1}\right]_{ll}        
        =
        \left[\left(R_l(Z_N-S\otimes \mathbb I_N)R_l^T\right)^{-1}\right]_{11},
        \label{eq:proof_L_stieltjes_i_block_inverse}
    \end{align}
   which evaluates to
    \begin{align}
        \left(R_l(Z_N-S\otimes \mathbb I_N)R_l^T\right)^{-1}
        =
        \begin{bmatrix}
            \left(
                \tilde E_{ll}
                - Q_l^T\tilde{Z}_{\setminus l}^{-1}Q_l
            \right)^{-1} 
            & \square_{N\times(L-1)N} \\ \square_{(L-1)N\times N} &
            \left(
                \tilde{Z}_{\setminus l} 
                - Q_l\tilde E_{ll}^{-1}Q_l^T
            \right)^{-1}
        \end{bmatrix},
        \label{eq:proof_prop_6_block_matrix_inverse_reorderd}
    \end{align}
    with $\tilde Z_{\setminus l} = Z_{\setminus l} - S_{\setminus l}\otimes \mathbb I_N$. 
    Assuming that the inverse of $\tilde Z_{\setminus l}$ exists and is negative definite, it is straightforward to show that the existence of \eqref{eq:proof_L_stieltjes_i_block_inverse} is guaranteed by 
    \begin{align}
        s_l 
        > 
        \lambda_\mathrm{max}(E_{ll}) 
        + 
        \left|\left|Q_l\right|\right|_{\infty}^2 
        \left|\left|\tilde Z_{\setminus l}^{-1}\right|\right|_{\infty}^2 
        \label{eq:proof_property_6_s_i_bound}.
    \end{align}
    Therefore, we have to show that the inverse exists. 
    The domain $\mathcal S_{\{-\}^L}^Z$ defines $S_{\setminus l} \in \mathcal S_{\{-\}^{L-1}}^{\hat Z^{(l)}}$, i.e. the matrix inverse of the lower principal block in \eqref{eq:proof_prop_6_block_matrix_inverse_reorderd} exists and is negative definite.
    Since, $-Q_l\tilde E_{ll}^{-1} Q_l^T$ is positive definite, this implies that $\tilde Z_{\setminus l}$ is negative definite and the matrix inverse exists. 
    With, \eqref{eq:proof_property_6_s_i_bound} we therefore also know that the upper principal block in \eqref{eq:proof_prop_6_block_matrix_inverse_reorderd} is negative definite, and so are all principal blocks in \eqref{eq:proof_property_6_block_matrix_reordered}.
    By Schur complement $Z_N-S\otimes \mathbb I_N$ is negative definite for all $S\in\mathcal{S}_{\{-\}^L}^Z$, the matrix inverse exists, and the corresponding (empirical) $L$-conditional Stieltjes transform is negative definite and increasing, towards zero, with $S$ increasing.\\
    It remains to show that the functional inverse exists for $S\in\mathcal S_{\{-\}^L}^Z$.
    To this end we consider the case in which $L-1$ on-diagonal elements of the $L$-conditional Stieltjes transform are constant, as in the proof of Property~\ref{prop:2conditional_stieltjes_transform_on_real_plane}.
    Assume, without loss of generality, that the first on-diagonal element varies and the remaining $L-1$ elements are kept constant, as 
    \begin{align}
        \left(
            \mathfrak G_{Z_N}^{\mathcal D_L}(S)
        \right)_{2:L,2:L}
        =
        \mathfrak G_{\hat Z^{(1)}}^{\mathcal D_{L-1}}\left(S_{\setminus 1}\right)
        = \mathrm {diag}\{g_{2:L}\}
        =
        \mathrm{const}.
        \label{eq:proof_prop_6_constant_value_bijective}
    \end{align} 
    The second equality follows from \eqref{eq:proof_prop_6_block_matrix_inverse_reorderd} and the definition of $\hat Z^{(1)}$ in Property~\ref{prop:Lconditional_stieltjes_transform_on_real_plane}.
    The functional inverse of this Stieltjes transform exists for all $S_{\setminus 1}\in \mathcal S_{\{-\}^{L-1}}^{\hat Z^{(1)}}$ and large enough N, i.e. the value of $S_{\setminus 1}$ at which \eqref{eq:proof_prop_6_constant_value_bijective} is constant is unique for every $s_1$ and this holds for all $S \in \mathcal S_{\{-\}^L}^Z$ by definition.
    Therefore, the region in which \eqref{eq:proof_prop_6_constant_value_bijective} is constant admits a contour, on which $g_1$ is strictly increasing by the same argument as in the Proof of Property~\ref{prop:2conditional_stieltjes_transform_on_real_plane}.
    
\end{proof}
% The following proofs are based on the approach by Gaussian representation of the Haar measure, see \cite{guionnetFourierViewTransform2005} for an introduction to the method.

\begin{proof}[Proof of Theorem~\upshape{\ref{thm:pL_rank1_integral}}]
Let $M(N)=1$ and $\beta=1$, then \eqref{eq:L_coupled_spherical_integrals} reduces to
\begin{align}
    I_N^{(1)}(\{D_l\},\{E_{lk}\})=
    \int 
        \exp\left\{
            \sum_{l,k=1}^L
            N
            \sqrt{\theta_l \theta_k} u_{l,1}^T E_{lk}u_{k,1}
        \right\}
        \prod_{l=1}^L
            \mathrm d m_N^{(1)}(U_l)   
        \label{eq:integral_L_rank1},
\end{align}
where $\theta_l$ is the single non-zero eigenvalue of $D_lD_l^T$, and $u_{l,i}\in\mathbb R^{N\times 1}$ is the $i^\mathrm{th}$ column of $U_l=[u_{l,1},\ldots, u_{l,N}]$.
As in \cite{guionnetFourierViewTransform2005}, we replace all Haar measure by the multivariate Gaussian measure $\mu$ of zero mean and identity covariance matrix, and substitute $u_{l,1}$ by $g_{l,1}/||g_{l,1}||_2$, with $||\cdot||_2$ representing the $L2$-norm, to obtain
\begin{align}
    I_N^{(1)}(\{D_l\},\{E_{lk}\})
    =
    \int
        \exp\left\{
            \sum_{l,k=1}^L
            N\sqrt{\theta_l \theta_k} \frac{g_{l,1}^T}{||g_{l,1}||_2}E_{lk}\frac{g_{k,1}}{||g_{k,1}||_2}
        \right\}
        \prod_{l=1}^L
            \mathrm d \mu(g_{l,1}).
\end{align}
Consider the event
\begin{align}
    \mathcal A_N(\kappa) 
    = 
    \left\{
        \left|
            \frac{||g_{l,1}||_2^2}{N}
            -1
        \right|
        \leq N^{-\kappa}
        ~\forall~ l\in[L]
    \right\}
    \label{eq:event_rank1_P_2}.
\end{align}
We note that this event is similar to event $\mathcal B_N(\kappa)$ of \cite{guionnetFourierViewTransform2005} in the proof of Theorem 7, referred to as $\mathcal B_N^\mathrm{GM}(\kappa)$, henceforth.
The difference lies in the fact that in our case the Gaussian vectors do not have to be asymptotically orthogonal as the column vectors of $U_l$ and $U_k$ for $l\neq k$ are not necessarily orthogonal.
Nonetheless, the following bounds from \cite{guionnetFourierViewTransform2005} still hold
\begin{align}
    \mathbb E\left[\mathfrak 1_{\mathcal A_N(\kappa)} e^{NF}\right]
    \leq 
    I_N^{(1)}(\{D_l\},\{E_{lk}\})
    \leq 
    (1+\epsilon(N,\kappa))\mathbb E\left[\mathfrak 1_{\mathcal A_N(\kappa)}e^{NF}\right]
    \label{eq:p_2_rank_1_general_bounds},
\end{align}
with $\epsilon(N,\kappa)$ going to zero as $N\uparrow\infty$ if $\kappa<1/2$ and $\mathfrak 1$ being the indicator function.
The exponent term $F$ is given by
\begin{align}
    F 
    =
    \sum_{l=1}^L
    \underbrace{
        \theta_l \frac{g_{l,1}^T}{||g_{l,1}||_2} E_{ll}\frac{g_{l,1}}{||g_{l,1}||_2}
        }_{F_{ll}}
        +
        \sum_{k\neq l}
            \underbrace{
            \sqrt{\theta_l \theta_k} \frac{g_{l,1}^T}{||g_{l,1}||_2} E_{lk}\frac{g_{k,1}}{||g_{k,1}||_2}
            }_{F_{lk}}
    \label{eq:P2_rank1_F_exponent_terms}.
\end{align}
We will now bound each summand of \eqref{eq:P2_rank1_F_exponent_terms} in order to remove the denominator containing $||g_{l,1}||_2$ to facilitate Gaussian integration.
In the following, the lower bound to \eqref{eq:p_2_rank_1_general_bounds} is derived explicitly, the upper bound follows analogously.
The $F_{ll}$ exponent terms are structurally similar to the formulation in \cite[Equation ($16$)]{guionnetFourierViewTransform2005}, such that
\begin{align}
    \exp\left\{
        NF_{ll} 
    \right\}
    \geq 
    \exp\left\{
        N\theta_lv_l 
        - 
        N^{1-\kappa}
        \theta_l(||E_{ll}||_\infty + |v_l|)
    \right\}    
    \exp\left\{
        \theta_lg_{l,1}^T (E_{ll} - v_l\mathbb I_N)g_{l,1}
    \right\}
    \label{eq:exp_Q_ll_lower_bound},
\end{align}
where $v_l$ is introduced to control the value around which the integral result concentrates as $N\uparrow\infty$, since the $N\theta_l v_l$ terms will dominate the exponent. 
Therefore, $v_l$ is referred to as a concentration variable in the following.
The computations for the exponent terms $F_{lk}$ with $l\neq k$ are repeated explicitly, as
\begin{align}
    NF_{lk}
    =~&
        N\sqrt{\theta_l \theta_k}\frac{g_{l,1}^TE_{lk}g_{k,1}}{||g_{l,1}||_2 ||g_{k,1}||_2} 
        + \sqrt{\theta_l \theta_k}g_{l,1}^TE_{lk}g_{k,1}
        - \sqrt{\theta_l \theta_k}g_{l,1}^TE_{lk}g_{k,1}
    \label{eq:Flk_rank1A}
    \\
    \geq~&
    \sqrt{\theta_l \theta_k}g_{l,1}^TE_{lk}g_{k,1}
    -
    \sqrt{\theta_l\theta_k}
    \left|
        \frac{N}{||g_{l,1}||_2 ||g_{k,1}||_2}-1
    \right|
    \left|
        g_{l,1}^TE_{lk}g_{k,1}
    \right|
    \\
    \geq~&
    \sqrt{\theta_l \theta_k}g_{l,1}^TE_{lk}g_{k,1}
    -
    N^{1-\kappa}
    \sqrt{\theta_l\theta_k}
    ||E_{l,k}||_\infty
    \label{eq:Flk_lower_bound}.
\end{align}
Bound \eqref{eq:Flk_lower_bound} is obtained by noting that $|g_{l,1}^TE_{lk}g_{k,1}|\leq ||g_{l,1}||_2||g_{k,1}||_2 ||E_{lk}||_\infty$ and by the following bounds on the squared $L2$-norm based on \eqref{eq:event_rank1_P_2}
\begin{align}
    N(1-N^{-\kappa})\leq ||g_{l,1}||_2^2\leq N(1+N^{-\kappa})
    \label{eq:bound_squared_l2_gaussian_vect}.
\end{align}
For sufficiently large $N$, the lower bound is positive, such that the $L2$-norm is bounded by the corresponding square roots
\begin{align}
    \sqrt{N(1-N^{-\kappa})}\leq ||g_{l,1}||_2\leq \sqrt{N(1+N^{-\kappa})}.
\end{align}
Note that, unlike in \eqref{eq:exp_Q_ll_lower_bound}, no additional concentration variable is introduced in the lower bound of $NF_{lk}$. 
There are $L$ integration variables such that we only require $L$ variables $v_l$ to control the asymptotic concentration of the integrals.
The coupling of the integration variables causes a coupling of the concentration variables $v_l$.
Applying \eqref{eq:exp_Q_ll_lower_bound} and \eqref{eq:Flk_lower_bound} to \eqref{eq:p_2_rank_1_general_bounds} yields the lower bound
\begin{align}
    I_N^{(1)}(\{D_l\},\{E_{lk}\})
    \geq~&
    \exp\left\{
        N\sum_{l=1}^L\theta_l v_l
    \right\}
    \exp\left\{
        -N^{1-\kappa}\sum_{l=1}^L
            \theta_l(||E_{ll}||_\infty+|v_l|)+\sum_{k\neq l}\sqrt{\theta_l \theta_k} ||E_{lk}||_\infty
    \right\}
    \\
    &\times
    \underbrace{
        \mathbb E\left[
            \mathfrak 1_{\mathcal A_N(\kappa)}
            \exp\left\{
                \sum_{l=1}^L
                    \theta_l(g_{l,1}^TE_{ll}g_{l,1} - v_l g_{l,1}^Tg_{l,1})
                    +
                    \sum_{k\neq l}
                        \sqrt{\theta_l\theta_k}g_{l,1}^TE_{lk}g_{k,1}
            \right\}
        \right]
    }_{\Xi_\mathrm{lb}^{(1)}}.
    \nonumber
\end{align}
To simplify the notation we introduce the shorthand $\Xi_\mathrm{lb}^{(1)}$, which signifies the expectation with respect to the Gaussian measures in the lower bound computation for $M(N)=1$.
It is computed by means of Gaussian integration.
Averaging over all $g_{l,1}$ separately is tedious as the Gaussian vectors are coupled through the $F_{lk}$ terms. 
Instead, we define the joint distribution $g_1=[g_{l,1}^T,\ldots, g_{L,1}^T]^T \sim \mathcal N(0, \mathbb I_{LN})$ and compute the corresponding joint average, where $\mathcal N(m, C)$ is the multivariate real Gaussian distribution with mean $m$ and covariance matrix $C$.
The Gaussian integral reduces to
\begin{align}
    \Xi_\mathrm{lb}^{(1)}
    =~&
    \int
        \mathfrak 1_{\mathcal A_N(\kappa)}
        \frac{1}{(2\pi)^{LN/2}}
        \exp\left\{
            -\frac{1}{2}
            g_1^T
            \Lambda
            g_1
        \right\}
        \mathrm d g _1
        \label{eq:expecation_lb_2_1},
\end{align}
with
\begin{align}
    \Lambda
    =
    \begin{bmatrix}
        -2\theta_1 E_{11} + (1+2\theta_1v_1)\mathbb I_N & -2\sqrt{\theta_1\theta_2}E_{12} & \ldots 
        \\
        -2\sqrt{\theta_1\theta_2}E_{21} & -2\theta_2 E_{22}+ (1+2\theta_2v_2)\mathbb I_N  \\
        \vdots && \ddots \\
    \end{bmatrix}
    \label{eq:lambda_definition_rank_1}.
\end{align}
Recall that $E_{12}=E_{21}^T$ and $\theta_l\in \mathbb R^+$. 
The block matrix \eqref{eq:lambda_definition_rank_1} is interpreted as the precision matrix of the Gaussian vector $g_1\sim \mathcal{N}\left(0,\Lambda^{-1}\right)$.
The expectation is hence given by
\begin{align}
    \Xi_\mathrm{lb}^{(1)}
    =
    |\Lambda|^{-\frac{1}{2}}
    \int
        \mathfrak 1_{\mathcal A_N(\kappa)}
        \mathrm d \mu(g_1; 0, \Lambda^{-1})
    =
    |\Lambda|^{-\frac{1}{2}}
    P_N(\mathcal A_N(\kappa))
    \label{eq:P2_rank1_Xi}.
\end{align}
Where $\mu(\cdot;m,C)$ is the real, multivariate Gaussian measure of mean $m$ and covariance matrix $C$.
The task is hence to bound the probability of event $\mathcal A_N(\kappa)$ under a Gaussian probability measure with zero mean and variance $\Lambda^{-1}$.
It is lower bounded by
\begin{align}
    P_N(\mathcal A_N(\kappa))
    =~&
    \mathrm{Pr}\left\{
        \left|
            \frac{||g_{l,1}||_2^2}{N}-1
        \right|
        \leq 
        N^{-\kappa}
        ~\forall~l
    \right\}
    =
    1-\mathrm{Pr}\left\{
        \left|
            \frac{||g_{l,1}||_2^2}{N}-1
        \right|
        > 
        N^{-\kappa}
        ~\forall~l
    \right\}
    \\
    \geq~&
    1
    -
    \sum_{l=1}^L
        \mathrm{Pr}\left\{
            \left|
                \frac{||g_{l,1}||_2^2}{N}-1
            \right|
            > 
            N^{-\kappa}
        \right\}
    \label{eq:P2_rank_1_lower_bound_pevent_probability}.
\end{align}
In order to bound the probabilities in \eqref{eq:P2_rank_1_lower_bound_pevent_probability} by Chebyshev's inequality, consider $||g_{l,1}||_2^2/N$ as an RV.
For the bound to be applicable the RV must have unit mean.
The distribution of the RVs is, however, coupled as $\Lambda^{-1}$ is not block-diagonal.
Therefore, we jointly enforce the condition for all $l\in[L]$ by
\begin{align}
    \mathrm{Tr}_L\left[
        \mathbb E\left[
            g_1g_1^T
        \right]
    \right]
    =\mathrm{Tr}_L\left[
        \Lambda^{-1}
    \right]
    =\mathbb I_L.
    \label{eq:P2_rank1_unit_mean_constraint}
\end{align}
Normalization by $1/N$ is done implicitly by the normalized-partial trace $\mathrm{Tr}_L$.
The $L$ scalar variables $v_l$ will be chosen such that \eqref{eq:P2_rank1_unit_mean_constraint} is fulfilled. 
This implies that $L^2$ variables are controlled by $L$ degrees of freedom.
Therefore, we must introduce some additional constraint which reduces this condition to $\{v_l\}$ controlling $L$ variables only.
The next steps reveal that the trace in \eqref{eq:P2_rank1_unit_mean_constraint} is an empirical $L$-conditional Stieltjes transform. 
By assuming that the corresponding R-transform is diagonal, we force the off-diagonals of the Stieltjes transform to be zero.
This is reflected in the condition that block matrix $Z$ is R-cyclic.
Define $\hat{S}$ and $\hat Z_{N}$ as 
\begin{align}
    \hat{S} = 
    -P^{\frac{1}{2}}SP^{\frac{1}{2}}
    =
    -P^{\frac{1}{2}}
    \mathrm{diag}\left\{
        \frac{1}{2\theta_1}+ v_1, \ldots, \frac{1}{2\theta_L} +v_L
    \right\}
    P^{\frac{1}{2}}
    \label{eq:L_rank1_definition_Shat}
\end{align}
and
\begin{align}
    \hat{Z}_N
    = 
    -\begin{bmatrix}
        2\theta_1 E_{11} &\ldots & 2\sqrt{\theta_1\theta_L} E_{1L}
        \\
        \vdots & \ddots & \vdots
        \\
        2\sqrt{\theta_1 \theta_L} E_{L1} &\ldots& 2\theta_L E_{LL}
    \end{bmatrix}
    =
    -
    P^\frac{1}{2}Z_NP^\frac{1}{2}
    \label{eq:L_rank1_definition_Zhat},
\end{align}
recall $P = \mathrm{diag}\left\{2\theta_1,\ldots,2\theta_L\right\}$ and $Z_N$ as in Definition~\ref{def:operator_valued_convergence}.
Then, \eqref{eq:P2_rank1_unit_mean_constraint} is found to introduce the empirical operator-valued Stieltjes transform as
\begin{align}
    \mathrm{Tr_L}\left[
        \mathbb E\left[
            g_1g_1^T
        \right]
    \right]
    =
    \mathrm{Tr_L}\left[
        \left(\hat Z_N- \hat{S}\otimes \mathbb I_{N}\right)^{-1}
    \right]
    =
    \mathfrak{G}_{\hat Z_N}^{\mathcal D_L}\left(\hat{S}\right)
    =
    \mathbb I_L.
    \label{eq:P2_rank1_OVStilTrans_condition}
\end{align}
Note that due to the R-cyclic assumption the empirical operator-valued Stieltjes transform on $\mathcal B_L$ collapses on $\mathcal D_L$ for all $\hat S\in \mathcal D_L$.
Next, $v_l$ shall be determined such that \eqref{eq:P2_rank1_OVStilTrans_condition} is fulfilled. 
The variables of interest are located in the argument of the Stieltjes transform. 
Applying the scaling Properties \ref{prop:operator_valued_stieltjes_transform_left_scaling} and \ref{prop:operator_valued_stieltjes_transform_right_scaling} followed by some algebraic manipulation and taking the functional inverse yields
\begin{align}
    \mathfrak{K}_{Z_N}^{\mathcal D_L}(-P) = S = \mathrm{diag}\{v_1,\ldots, v_L\} + P^{-1}.
\end{align}
We solve for $v_l$ by reformulation, which are given by the empirical operator-valued R-transform of $Z$ evaluated at $P$, as
\begin{align}
    \mathrm{diag}\{v_1, \ldots,v_L\}
    =
    \mathfrak{R}_{Z_N}^{\mathcal{D}_L}(P) = \mathfrak{K}_{Z_N}^{\mathcal D_L}(-P)-P^{-1}.
\end{align}
Chebyshev's inequality, furthermore, requires us to compute the variance of $||g_{l,1}||_2^2$.
To remove the correlation of the entries of $g_{l,1}$ we left multiply $g_{l,1}$ by $U_{\Lambda,l,1}^T$ which diagonalizes the $N\times N $ block of $\Lambda^{-1}$ at index $(l,l)$ upon left and transposed right multiplication.
Then, by substituting $h_{l,1} =U_{\Lambda,l,1}^T g_{l,1}$, all Gaussian vectors in \eqref{eq:P2_rank1_diagonalized_gaussian_vectors} have independent entries.
Applying \eqref{eq:P2_rank1_unit_mean_constraint} further simplifies the squared mean to
\begin{align}
    \sigma_{l,1}^2 
    &= 
    \frac{1}{N^2}\mathbb E\left[h_{l,1}^Th_{l,1}h_{l,1}^Th_{l,1}\right]
    -\frac{1}{N^2} \mathbb E\left[h_{l,1}^Th_{l,1}\right]^2
    \label{eq:P2_rank1_diagonalized_gaussian_vectors}
    \\
    &=
    \frac{1}{N^2}\sum_{\substack{i,j=1\\ i\neq j}}^N
        \overline{h_{l,1,i}^2}
        ~
        \overline{h_{l,1,j}^2}
    +\frac{1}{N^2} \sum_{i=1}^N\overline{h_{l,1,i}^4}
    -1
    \label{eq:P2_rank1_diagonalized_gaussian_vectors_simp1}
    \\
    &=
    \frac{1}{N^2}\sum_{i,j=1}^N
        \overline{h_{l,1,i}^2}
        ~
        \overline{h_{l,1,j}^2}
    +\frac{2}{N^2} \sum_{i=1}^N\overline{h_{l,1,i}^2}^2
    -1
    \label{eq:P2_rank1_diagonalized_gaussian_vectors_simp2}
    \\
    &=
    \left(\left(\mathrm{Tr}_L\left[\Lambda^{-1}\right]\right)_{ll}\right)^2
    +\frac{2}{N^2} \sum_{i=1}^N \lambda_{[\Lambda^{-1}]_{ll},i}^2
    -1
    \label{eq:P2_rank1_diagonalized_gaussian_vectors_simp3}
    \\
    &=
    \frac{2}{N} \mathrm{Tr}\left[\left[\Lambda^{-1}\right]_{ll}^2\right]
    \label{eq:P2_rank1_diagonalized_gaussian_vectors_simp4},
\end{align}
where $h_{l,1,i}$ is the $i^\mathrm{th}$ element of vector $h_{l,1}$.
To bound \eqref{eq:P2_rank1_diagonalized_gaussian_vectors_simp4}, we compute the $l^\mathrm{th}$ principal block  of $\Lambda^{-1}$ by
\begin{align}
    \Lambda^{-1}
    =
    -
    P^{-\frac{1}{2}}
    R_l^T
    \begin{bmatrix}
        (\tilde{E}_{ll}-Q_l^T\tilde Z_{\setminus l}^{-1}Q_l)^{-1} & \square_{N\times (L-1)N}
        \\
        \square_{(L-1)N\times N} & \square_{(L-1)N\times(L-1)N}
    \end{bmatrix}
    R_l
    P^{-\frac{1}{2}},
    \label{eq:P2_rank1_invers_reformulation}
\end{align}
assuming that all required inverses exist. 
In fact, this assumption must be correct as $\mathfrak{G}^{\mathcal D_L}_{Z_N}(S)$ does exist. 
Recall that $\tilde E_{ll} = E_{ll}-s_l\mathbb I_N$ and similarly $\tilde Z_{\setminus l}=Z_{\setminus l}-S_{\setminus l}\otimes \mathbb I_N$.
Let $\tilde{C}_{ll} = \tilde E_{ll}-Q_l^T\tilde Z_{\setminus l}^{-1} Q_l$, then \eqref{eq:P2_rank1_diagonalized_gaussian_vectors_simp4} is bounded by
\begin{align}
    \sigma_{l,1}^2 
    &=
    \frac{2}{N^2} 
    \frac{1}{(2\theta_l)^2}
    \sum_{i=1}^N
        \frac{1}{\lambda_{\tilde{C}_{ll},i}^2}
    \leq
    \frac{2}{N}\frac{1}{\left(2\theta_l \lambda_\mathrm{max}\left(\tilde{C}_{ll}\right)\right)^2}
    \label{eq:P2_rank1_variance_ll_bound_1},
\end{align}
where we maximize over the eigenvalues of $\tilde C_{ll}$ prior to taking the power of two, as $\tilde C_{ll}$ is negative definite.
Since the entries of $P^{1/2}$ are singular values of $D_l$, $P^{-1/2}\succ 0$ is guaranteed. 
Hence, for $\Lambda^{-1}\succ 0$, which is necessary as $\Lambda^{-1}$ is a covariance matrix, we require $\tilde C_{ll}\prec 0$. 
To lower bound the absolute value of the maximum eigenvalue of $\tilde C_{ll}$, $\mathrm{Tr}\left[\tilde C_{ll}^{-1}\right]$ is interpreted as a scalar Stieltjes transform.
This of course implies that $\mathrm{Tr}\left[\tilde C_{ll}^{-1}\right]<0$.
The scalar Stieltjes transform at $v_l+1/(2\theta_l)$ is given as
\begin{align}
    \mathrm{Tr}\left[
        \tilde C_{ll}^{-1}
    \right]
    =
    \mathrm{Tr}\left[
        \left(
            \smash{\underbrace{
                E_{ll}
                -
                Q_l^T
                \tilde Z_{\setminus l}^{-1}
                Q_l
            }_{C_{ll}}}
            - \left(v_l+\frac{1}{2\theta_l}\right)\mathbb I_N
        \right)^{-1} 
    \right]
    = \mathfrak{G}_{C_{ll}}\left(v_l+\frac{1}{2\theta_l}\right).
\end{align}
\vspace{10pt}\\
Based on Property~\ref{prop:scalar_stieltjes_transform_monotonic_increasing} we know that $\mathfrak{G}_{C_{ll}}(s)<0~\forall s\geq \lambda_\mathrm{max}(C_{ll}) +\eta$ for some $\eta>0$, such that
\begin{align}
    |\eta| \leq \left|v_l+\frac{1}{2\theta_l} - \lambda_\mathrm{max}(C_{ll})\right| = \left|\lambda_\mathrm{max}(\tilde C_{ll})\right|,
    \label{eq:P2_rank1_eta_bound}
\end{align}
which allows us to bound the variance by
\begin{align}
    \sigma_{l,1}^2 
    \leq 
    \frac{1}{
        2N\theta_l^2\eta^2
    }.
    \label{eq:sigmaBoundRank1}
\end{align}
Chebyshev's inequality is applied to the converse probabilities of \eqref{eq:P2_rank_1_lower_bound_pevent_probability} and bounded by \eqref{eq:sigmaBoundRank1} to obtain
\begin{align}
    \mathrm{Pr}\left\{
        \left|
            \frac{||g_{l,1}||_2^2}{N}-1
        \right|
        > 
        N^{-\kappa}
    \right\}
    \leq N^{2\kappa-1} (2\theta_l^2\eta^2)^{-1}
    \forall l.
\end{align}
If $\kappa<1/2$, there exists an $N>0$ for which $P_N(\mathcal A_N(\kappa))\geq1/2$.
We conclude that \eqref{eq:integral_L_rank1} is lower bounded by
\begin{align}
    I_N^{(1)}(\{D_l\},\{E_{lk}\})
    \geq~&
    \frac{1}{2}
    |\Lambda|^{-\frac{1}{2}}
    \exp\left\{
        N\sum_{l=1}^L\theta_l v_l
    \right\}
    \exp\left\{
        -N^{1-\kappa}\sum_{l,k=1}^L
            \sqrt{\theta_l\theta_k}||E_{lk}||_\infty
        -N^{1-\kappa}\sum_{l=1}^L
            |\theta_l||v_l|
    \right\}.
\end{align}
The upper bound in \eqref{eq:p_2_rank_1_general_bounds} is obtained in similar fashion to the lower bound.
Upper bounding the exponent terms of $F_{lk}$ in \eqref{eq:P2_rank1_F_exponent_terms} is done by inverting the sign of the terms scaling with $N^{1-\kappa}$, and the probability $P_N(\mathcal A_N(\kappa))$ is upper bounded by $1$ to obtain
\begin{align}
    I_N^{(1)}(\{D_l\},\{E_{lk}\})
    \leq~&
    (1+\epsilon(N,\kappa))
    |\Lambda|^{-\frac{1}{2}}
    \exp\left\{
        N\sum_{l=1}^L\theta_l v_l
    \right\}
    \exp\left\{
        N^{1-\kappa}\sum_{l,k=1}^L
            \sqrt{\theta_l \theta_k} ||E_{lk}||_\infty
        +
        N^{1-\kappa}\sum_{l=1}^L
            |\theta_l|v_l| 
    \right\}
    \label{eq:upper_bound_rank_1}.
\end{align}
Following the approach of \cite{guionnetFourierViewTransform2005} we can now compute the normalized logarithm of the integral and the corresponding bounds to attain
\begin{align}
        &-\frac{1}{N}\log 2 
        -
        N^{-\kappa} \sum_{l,k=1}^L
            \sqrt{\theta_l \theta_{k}} ||E_{l k}||_\infty
        -
        N^{-\kappa} \sum_{l=1}^L
            |\theta_l||v_l|
        \label{eq:P2_rank1_logarithmic_bound_reformulation}
        \\
        &\leq \frac{1}{N}\log I_N^{(1)}(\{D_l\},\{E_{lk}\})-\sum_{l=1}^{L}\theta_lv_l + \frac{1}{2N}\log |\Lambda|
        \nonumber
        \\
        &\leq \frac{1}{N}\log (1+\epsilon(N,\kappa))
        +
        N^{-\kappa} \sum_{l,k=1}^L
            \sqrt{\theta_l \theta_{k}} ||E_{lk}||_\infty
        +
        N^{-\kappa} \sum_{l=1}^L 
            |\theta_l||v_l|
        \nonumber.
\end{align}
The upper and lower bounds tend to zero as $N\uparrow\infty$.
Hence, the following asymptotic equivalence must be satisfied
\begin{align}
    \lim_{N\uparrow\infty}
    \frac{1}{N}\log I_N^{(1)}(\{D_l\},\{E_{lk}\})
    % =
    % \lim_{N\uparrow\infty}
    % - \frac{1}{2N}\log |\Lambda|
    % +
    % \sum_{l=1}^{L}\theta_lv_l 
    =
    \lim_{N\uparrow\infty}
    - \frac{1}{2N}\log |\Lambda|
    +
    \frac{1}{2}\mathrm{tr}\left(
        P \mathfrak{R}_{Z_N}^{\mathcal D_L}(P)
    \right)
    = g(\theta_1, \ldots, \theta_L)
    \label{eq:P2_rank1_asymptotic_eq_integral_after_bounding},
\end{align}
with $\sum_l \theta_lv_l = \frac{1}{2}\mathrm{tr}\left(P \mathfrak R_{Z_N}^{\mathcal D_L}(P)\right)$.
The determinant term is reformulated as
\begin{align}
    \frac{1}{2N}\log|\Lambda|
    =
    \frac{1}{2}\log |P|
    +\frac{1}{2N} \log \left|
        \left(\mathfrak{R}_{Z_N}^{\mathcal D_L}(P)+P^{-1}\right)\otimes \mathbb I_N-Z_N
    \right|.
\end{align}
Note, that $g(0,\ldots,0)=0$ since $\Lambda=\mathbb I_{LN}$ at $\theta_l=0~\forall l$. 
Therefore, $g(\theta_1,\ldots,\theta_L)$ can be reformulated by derivation and subsequent integration over the path $r(w) = [\theta_1 w,\ldots, \theta_Lw]$ with $0\leq w\leq 1$. 
By applying the property $1/N \mathrm{tr} = \mathrm{tr} \circ \mathrm{Tr_L}$ on an $LN\times LN$ matrix and considering the definition of the $L$-conditional Stieltjes and R-transform with the corresponding convergence properties we attain
\begin{align}
    \nabla g(\theta_1w,\ldots,\theta_Lw) =
    \begin{bmatrix}
        [R_Z(Pw)]_{11}\\
        \vdots
        \\
        [R_Z(Pw)]_{LL}
    \end{bmatrix}.
\end{align}
An equivalent form of $g(\theta_1,\ldots,\theta_L)$ is recovered by integration over the path $r(w)$ to obtain
\begin{align}
    g(\theta_1,\ldots,\theta_L)
    &= \int_{0}^1
        \nabla g(\theta_1w,\ldots,\theta_Lw)^T r'(w) 
        \mathrm d w
    = \frac{1}{2}\int_{0}^1 
        \begin{bmatrix}
            [R_Z(Pw)]_{11}
            \\
            \vdots
            \\
            [R_Z(Pw)]_{LL}
        \end{bmatrix}^T
        \begin{bmatrix}
            2 \theta_1 \\
            \vdots \\
            2 \theta_L
        \end{bmatrix}
        \mathrm d w=
    \frac{1}{2}
    \int_0^1
        \mathrm{tr}\left(
            R_{Z}(Pw)P
        \right)
        \mathrm d w
    \label{eq:g_func_definition_rank_1}.
\end{align}
In the case of $\beta=2$, replace the real Gaussian measure in \eqref{eq:expecation_lb_2_1} by its complex-valued counterpart and adjust the scaling of the exponent. 
Hence, $\theta_l$ is divided by $2$ in every subsequent step, but on the path $r(w)$ which remains unchanged. 
\end{proof}

\begin{proof}[Proof of Theorem~\upshape{\ref{thm:pL_rankM_integral}}]
Let $\beta=1$ and $M(N)>1$, then \eqref{eq:L_coupled_spherical_integrals} is given as
\begin{align}
    I_N^{(1)}(\{D_l\},\{E_{lk}\})
    =
    \int
        \exp\left\{
            \sum_{i=1}^{M(N)}
            \sum_{l,k=1}^L
            N\sqrt{\theta_{k,i}\theta_{l,i}} u_{l,i}^T E_{lk}u_{k,i}
        \right\}
        \prod_{l=1}^L
            \mathrm d m_N^{(1)}(U_l)
        \label{eq:integral_P2_rankM},
\end{align}
where $\sqrt{\theta_{l,i}}$ is the $i^\mathrm{th}$ singular value of $D_l$. 
The column vectors $u_{l,i}$ and $u_{l,j}$ are orthogonal for all $i\neq j$. 
We can, therefore, no longer replace them by normalized Gaussian vectors.
Instead, following the approach of \cite[Section 2.2]{guionnetFourierViewTransform2005}, we substitute with the corresponding Schmidt orthogonalized Gaussian $\tilde{g}_{l,i}$, leading to
\begin{align}
    \label{eq:integral_P2_rankM_orthogonalized}
    I_N^{(1)}(\{D_l\},\{E_{lk}\})
    =
    \int
        \exp\left\{
            \sum_{i=1}^{M(N)}
            \sum_{l,k=1}^L
            N\sqrt{\theta_{l,i} \theta_{k,i}} \frac{\tilde g_{l,i}^T}{||\tilde{g}_{l,i}||_2}E_{lk}\frac{\tilde g_{k,i}}{||\tilde{g}_{k,i}||_2}
        \right\}
        \prod_{l=1}^L
            \mathrm d \mu(g_{l,i}).
\end{align}
Note that the Gaussian vectors $g_{l,i}$ are orthogonalized over index $i$ but not over index $l$, as $u_{l,i}$ and $u_{k,j}$ are columns of independent Haar-distributed matrices of the orthogonal group if $k\neq l$.
Consider the event
\begin{align}
    \mathcal B_N(\kappa)=
    \left\{
        \left|
            \frac{||g_{l,i}||_2^2}{N}
            -1
        \right|
        \leq N^{-\kappa}
        ~\forall~ (l,i)\in \mathcal V
        ,
        \left|
            \frac{g_{l,i}^Tg_{l,j}}{N}
        \right|
        \leq N^{-\kappa}
        ~\forall~ (l,i)\neq (l,j) \in \mathcal V
    \right\}
    \label{eq:event_rankM_P2},
\end{align}
with $\mathcal V = \{(l,i)|l\in[L],i\in[M(N)]\}$.
Consider the complement event, $\mathcal B_N^\mathrm c(\kappa)$. 
The Gaussian are \textit{i.i.d.}, such that the complement event probability is bounded by
\begin{align}
    \mathrm{Pr}\left\{\mathcal B_N^\mathrm{c}(\kappa)\right\}
    &\leq
    \sum_{(l,i)\in \mathcal{V}}
        \mathrm{Pr}\left\{
            \left|
                \frac{||g_{l,i}||_2^2}{N}
                -1
            \right|
            > N^{-\kappa}
        \right\}
    +
    \sum_{\substack{(l,i),(k,j)\in \mathcal{V}\\(l,i)\neq(k,j)}}
        \mathrm{Pr}\left\{
            \left|
                \frac{g_{l,i}^Tg_{k,j}}{N}
            \right|
            > N^{-\kappa}
        \right\}
        \leq 2 C'(\kappa)e^{-\alpha N^{1-2\kappa}},
\end{align}
with finite-valued $C'(\kappa)$, for any $0<\kappa<1/2$ and large enough $N$, a corresponding bounding $\alpha>0$ exists as shown in \cite[Section 2.2]{guionnetFourierViewTransform2005}.
Therefore, the $L$-fold coupled spherical integral with $M(N)>1$ is lower and upper bounded with $\epsilon(N,\kappa)$ going to zero if $\kappa<1/2$ by
\begin{align}
    \mathbb E\left[\mathfrak 1_{\mathcal B_N(\kappa)} \prod_{i=1}^{M(N)}e^{NF_i}\right]
    \leq 
    I_N^{(1)}(\{D_l\},\{E_{lk}\})
    \leq 
    (1+\epsilon(N,\kappa))\mathbb E\left[\mathfrak 1_{\mathcal B_N(\kappa)}\prod_{i=1}^{M(N)}e^{NF_i}\right]
    \label{eq:p2_rankM_general_bounds},
\end{align}
where %analogously to \eqref{eq:P2_rank1_F_exponent_terms}:
\begin{align}
    F_i
    =
    \sum_{l=1}^L
    \underbrace{
    \theta_{l,i} \frac{\tilde{g}_{l,i}^T}{||\tilde g_{l,i}||_2} E_{ll}\frac{\tilde g_{l,i}}{||\tilde g_{l,i}||_2}
    }_{F_{ll,i}}
    +
    \sum_{k\neq l}
    \underbrace{
    \sqrt{\theta_{l,i} \theta_{k,i}} \frac{\tilde g_{l,i}^T}{||\tilde g_{l,i}||_2} E_{lk}\frac{\tilde g_{k,i}}{||\tilde g_{k,i}||_2}
    }_{F_{lk,i}}.
\end{align}
By applying the same reformulations as in the Proof of Theorem~\ref{thm:pL_rank1_integral}
\begin{align}
    NF_{ll,i}
    \geq
        N\theta_{l,i}v_{l,i} 
        - 
        |N-||\tilde{g}_{l,i}||_2^2|
        \theta_{l,i}(||E_{ll}||_\infty + |v_{l,i}|)
        +
        \theta_{l,i}(\tilde g_{l,i}^T E_{ll} \tilde g_{l,i} - v_{l,i} \tilde g_{l,i} ^T \tilde g_{l,i})
    \label{eq:exp_Q_ll_i_lower_bound},
\end{align}
and
\begin{align}
    NF_{lk,i}
    \geq
    \sqrt{\theta_{l,i} \theta_{k,i}}\tilde g_{l,i}^TE_{lk}\tilde g_{k,i}
    -
    \left|
        N-
        ||\tilde g_{l,i}||_2
        ||\tilde g_{k,i}||_2
    \right|
    \sqrt{\theta_{l,i}\theta_{k,i}}
    ||E_{lk}||_\infty
    \label{eq:Flki_lower_bound}.
\end{align}
Next, we replace the Schmidt orthogonalized Gaussian by the corresponding non-orthogonalized Gaussian. 
In the approach taken in \cite{guionnetFourierViewTransform2005} this is done by diagonalizing the $E$ matrix, corresponding to $E_{11}$ at $L=1$ in our formulation. 
This is, however, not viable in the coupled setting since $E_{lk}$ and $E_{ll}$ do not share an eigenspace.
Therefore, we have to repeat the derivation of the bounds. 
Following the formulation of Schmidt orthogonalization in \cite[Equation (24)]{guionnetFourierViewTransform2005} we attain the following bound on $|||\tilde g_{l,i}||_2^2-||g_{l,i}||_2^2|$ as
\begin{align}
    \Big|
        ||\tilde{g}_{l,i}||^2_2
        -
        ||g_{l,i}||_2^2
    \Big|
    &=
    \left|
        \left(g_{l,i} + \sum_{j=1}^{i-1}A_{ij}^{(l)} g_{l,j}\right)^T
        \left(g_{l,i} + \sum_{j=1}^{i-1}A_{ij}^{(l)} g_{l,j}\right)
        -g_{l,i}^Tg_{l,i}
    \right|
    \\
    &=
    \left|
        2g_{l,i}^T\sum_{j=1}^{i-1}A_{ij}^{(l)}g_{l,j}
        +
        \sum_{j=1}^{i-1}
            \sum_{\substack{j'=1\\j'\neq j}}^{i-1}
                A_{ij}^{(l)}
                A_{ij'}^{(l)}
                g_{l,j}^Tg_{l,j'}
        +\sum_{j=1}^{i-1}
            {A_{ij}^{(l)}}^2g_{l,j}^Tg_{l,j}
    \right|
    \\
    &\leq
        2\sum_{j=1}^{i-1}\left|A_{ij}^{(l)}\right|\left|g_{l,i}^Tg_{l,j}\right|
        +
        \sum_{j=1}^{i-1}
            \sum_{\substack{j'=1\\j'\neq j}}^{i-1}
                \left|A_{ij}^{(l)}\right|
                \left|A_{ij'}^{(l)}\right|
                \left|g_{l,j}^Tg_{l,j'}\right|
        +\sum_{j=1}^{i-1}
            \left|A_{ij}^{(l)}\right|^2\left|g_{l,j}^Tg_{l,j}\right|
    \label{eq:P2_rankM_bound_diff_orthogonalized},
\end{align}
where $A_{ij}^{(l)}$ are the orthogonalization coefficients for $\tilde g_{l,i}$.
In \cite[Section 2.2]{guionnetFourierViewTransform2005} it is established that, for $\mathcal B_N^\mathrm{GM}(\kappa)$, $\sup_{j<i}|A_{ij}|\leq c''N^{-\kappa}$.
The event $\mathcal B_N(\kappa)$ is similar to $\mathcal B_N^\mathrm{GM}(\kappa)$, and the bound on $A_{ij}$ holds, such that \eqref{eq:P2_rankM_bound_diff_orthogonalized} is bounded by
\begin{align}
    \left|
        ||\tilde{g}_{l,i}||^2_2
        -
        ||g_{l,i}||_2^2
    \right|
    \leq 2c''M(N)N^{1-2\kappa}
    +M(N)^2{c''}^2N^{1-3\kappa}
    +M(N){c''}^2N^{1-2\kappa}(N^{-\kappa}+1).
\end{align}
Now let $M(N) = \zeta N^{\rho}$ for some finite $\zeta>0$ as $N\uparrow\infty$ and $\rho<\kappa<1/2$, then for sufficiently large $N$ and some finite constant $c_3$
\begin{align}
    \left|
        ||\tilde{g}_{l,i}||^2_2
        -
        ||g_{l,i}||_2^2
    \right|
    \leq c_3N^{1-2\kappa+\rho}
    \label{eq:bound_orthogonalized_min_original}.
\end{align}
The bound \eqref{eq:exp_Q_ll_i_lower_bound} contains terms of the form $\tilde{g}_{l,i}^TE_{ll}\tilde g_{l,i}$. 
The corresponding error bound is given by
\begin{align}
    \label{eq:difference_bound_GS_orthogonalized_Fll_terms}
    \left|
        \tilde{g}_{l,i}^TE_{ll}\tilde g_{l,i}
        -
        g_{l,i}^TE_{ll}g_{l,i}
    \right|
    \leq 
    2c''N^{-\kappa} \sum_{j=1}^{i-1} \left|g_{l,i}^T E_{ll}g_{l,j}\right|
    +
    c''^2N^{-2\kappa} \sum_{j=1}^{i-1} \left|g_{l,j}^TE_{ll}g_{l,j}\right|
    +
    c''^2 N^{-2\kappa}\sum_{j=1}^{i-1}\sum_{\substack{j'=1\\j'\neq j}}^{i-1} \left|g_{l,j}^T E_{ll}g_{l,j'}\right|.
\end{align}
By definition, $E_{ll}$ is self-adjoint, and we can apply the following bound on the bilinear forms
\begin{align}
    |g_{l,i}^TE_{ll}g_{l,j} |
    \leq~&
    ||E_{ll}||_\infty ||g_{l,i}||_\infty||g_{l,j}||_\infty \leq ||E_{ll}||_\infty N(N^{-\kappa}+1)~ \forall ~i,j
    \label{eq:bound_bilinear_form_Ell}
\end{align}
Applying \eqref{eq:bound_bilinear_form_Ell} to \eqref{eq:difference_bound_GS_orthogonalized_Fll_terms} yields
\begin{align}
    \left|
        \tilde{g}_{l,i}^TE_{ll}\tilde g_{l,i}
        -
        g_{l,i}^TE_{ll}g_{l,i}
    \right|
    \leq 
    c_4 ||E_{ll}||_\infty N^{1-\kappa+\rho}
    \label{eq:bound_orthogonalized_vect_mat_vect}.
\end{align}
In order to bound $F_{lk,i}$ we first bound $\left|N-||\tilde g_{l,i}||_2||\tilde g_{k,i}||_2\right|$ by applying the Schmidt orthogonalization and computing upper and lower bounds of the expression in the absolute operation.
The bounds on $||\tilde g_{l,i}||_2$ are given as
\begin{align}
    &\sqrt{
        ||g_{l,i}||_2^2 
        - 
        2 \sum_{j=1}^{i-1} |A_{ij}^{(l)}||g_{l,i}^Tg_{l,j}|
        -
        \sum_{j'=1}^{i-1}\sum_{\substack{j''=1\\j''\neq j'}}^{i-1} |A_{ij'}^{(l)}||A_{ij''}^{(l)}||g_{l,j'}^Tg_{l,j''}|
    }
    \leq
    ||\tilde g_{l,i}||_2
    \\
    &\leq
    \nonumber
    \sqrt{    
        ||g_{l,i}||_2^2 
        + 
        2 \sum_{j=1}^{i-1} |A_{ij}^{(l)}||g_{l,i}^Tg_{l,j}|
        +
        \sum_{j'=1}^{i-1}\sum_{\substack{j''=1\\j''\neq j'}}^{i-1} |A_{ij'}^{(l)}||A_{ij''}^{(l)}||g_{l,j'}^Tg_{l,j''}|
    }.
\end{align}
With the bounds on $|A_{ij}^{(l)}|$ and $|g_{l,i}^Tg_{l,j}|$ under event $\mathcal B_N(\kappa)$ we obtain
\begin{align}
    &\sqrt{
        N(1-N^{-\kappa})
        - 
        2 c'' \zeta N^{1-2\kappa+\rho}
        -
        c''^2 \zeta^2 N^{1-3\kappa+2\rho}
        -
        c''^2 \zeta N^{1-2\kappa + \rho}(1+N^{-\kappa})
    }
    \leq
    ||\tilde g_{l,i}||_2
    \\
    &\leq
    \nonumber
    \sqrt{
        N(1+N^{-\kappa})
        + 
        2 c'' \zeta N^{1-2\kappa+\rho}
        +
        c''^2 \zeta^2 N^{1-3\kappa+2\rho}
        +
        c''^2 \zeta N^{1-2\kappa + \rho}(1+N^{-\kappa})
    }.
\end{align}
The roots yield real values for sufficiently large $N$ and $\rho<\kappa<1/2$.
The bounds are independent of $l$ and $i$, such that the upper and lower bounds for $||\tilde g_{l,i}||_2||\tilde g_{k,i}||_2$ are given by the square of the upper and lower bounds respectively.
Thus, we obtain the bound on $\left|N-||\tilde g_{l,i}||_2||\tilde g_{k,i}||_2\right|$ if $\rho<\kappa<1/2$ as
\begin{align}
    \left|N-||\tilde g_{l,i}||_2||\tilde g_{k,i}||_2\right|
    \leq c_5 N^{1-\kappa}
    \label{eq:lower_bound_N_min_length_orthogonalized_gaussians}.
\end{align}
The bound on $\left|\tilde g_{l,i}^TE_{lk}\tilde g_{k,i}-g_{l,i}^TE_{lk}g_{k,i}\right|~\forall~l\neq k$ is derived as
\begin{align}
     \Big|
        \tilde g_{l,i}^TE_{lk}\tilde g_{k,i}
        -
        g_{l,i}^TE_{lk}g_{k,i}
    \Big|
    &=
    \left|
    \left(
        g_{l,i}
        +\sum_{j=1}^{i-1}
            A_{ij}^{(l)}
            g_{l,j}
    \right)^T
    E_{lk}
    \left(
        g_{k,i}
        +\sum_{j=1}^{i-1}
            A_{ij}^{(k)}
            g_{k,j}
    \right)
    -
    g_{l,i}^TE_{lk}g_{k,i}
    \right|
    \\
    &=
    \left|
        \sum_{j=1}^{i-1}
        A_{ij}^{(k)}
        g_{l,i}^T
        E_{lk}
        g_{k,j}
        +
        \sum_{j=1}^{i-1}
        A_{ij}^{(l)}
        g_{l,j}^T
        E_{lk}
        g_{k,i}
        +
        \sum_{j',j''=1}^{i-1}
        A_{ij'}^{(l)}
        A_{ij''}^{(k)}
        g_{l,j'}^T
        E_{lk}
        g_{k,j''}
    \right|
    \\
    &\leq
    c''
    N^{-\kappa}
    \sum_{j=1}^{i-1}
    \left(
    ||g_{l,i}||_2||g_{k,j}||_2
    +
    ||g_{l,j}||_2||g_{k,i}||_2
    \right)
    ||E_{lk}||_\infty
    \label{eq:bound_bilinear_coupling_error}
    \\
    \nonumber
    &~~~+
    c''^2
    N^{-2\kappa}
    \sum_{j',j''=1}^{i-1}
    ||g_{l,j'}||_2
    ||g_{k,j''}||_2
    ||E_{lk}||_\infty.
\end{align}
Finally, the vector products in \eqref{eq:bound_bilinear_coupling_error} are bounded under the event $\mathcal B_N(\kappa)$ and $M(N)=\zeta N^{\rho}$, such that
\begin{align}
    \left|
        \tilde g_{l,i}^TE_{lk}\tilde g_{k,i}
        -
        g_{l,i}^TE_{lk}g_{k,i}
    \right|
    \leq~&
    ||E_{lk}||_\infty
    N^{1-\kappa}
    \left(
    2
    c''
    \zeta
    N^{\rho}
    +
    c''^2
    \zeta^2
    N^{-\kappa+2\rho}
    \right)
    \leq
    c_6N^{1-\kappa+\rho} ||E_{lk}||_\infty
    \label{eq:bound_orthogonalized_difference_lk}.
\end{align} 
By applying bounds \eqref{eq:bound_orthogonalized_min_original} and \eqref{eq:bound_orthogonalized_vect_mat_vect} to \eqref{eq:exp_Q_ll_i_lower_bound} we show that for some finite $c_7>0$ and large enough $N$
\begin{align}
    NF_{ll,i}
    \geq 
    N \theta_{l,i}v_{l,i} 
    +\theta_{l,i}\left(g_{l,i}^TE_{ll}g_{l,i}-v_{l,i}g_{l,i}^Tg_{l,i}\right)
    - c_7 |\theta_{l,i}|N^{1-\kappa+\rho} \left(||E_{ll}||_\infty+|v_{l,i}|\right).
\end{align}
The coupling terms $F_{lk,i}$ are bounded by applying bounds \eqref{eq:lower_bound_N_min_length_orthogonalized_gaussians} and \eqref{eq:bound_orthogonalized_difference_lk} to \eqref{eq:Flki_lower_bound}, which leads to
\begin{align}
    NF_{lk,i}
    \geq~&
    \sqrt{\theta_{l,i} \theta_{k,i}}g_{l,i}^TE_{lk}g_{k,i}
    -
    c_6\sqrt{\theta_{l,i}\theta_{k,i}}N^{1-\kappa+\rho}||E_{lk}||_\infty
    -c_5 \sqrt{\theta_{l,i}\theta_{k,i}}N^{1-\kappa}||E_{lk}||_\infty.
\end{align}
The lower bound of the $L$-fold coupled HCIZ integral is, therefore, given by
\begin{align}
    I_N^{(1)}&(\{D_l\},\{E_{lk}\})
    \geq~
    \exp\left\{
        - C_\mathrm {d} \sup_{l,i} |\theta_{l,i}|N^{1-\kappa+2\rho}\left(\sup_{l}||E_{ll}||_\infty+\sup_{l,i}|v_{l,i}|\right)
    \right\}
    \label{eq:P2_rankM_coupled_HCIZ_lower_bound}
    \\
    &\times 
    \exp\left\{
        - C_\mathrm c N^{1-\kappa +2\rho} \sup_{i,l\neq k}\sqrt{\theta_{l,i}\theta_{k,i}} ||E_{lk}||_\infty
    \right\}
    \prod_{i=1}^{M(N)}
    \prod_{l=1}^L
    \exp\left\{
        N \theta_{l,i} v_{l,i}
    \right\}
    \nonumber
    \\
    &\times
    \underbrace{
        \mathbb E\left[
            \mathfrak 1_{\mathcal B_N(\kappa)}
            \prod_{i=1}^{M(N)}
            \exp\left\{
                    \sum_{l=1}^L
                    \theta_{l,i}
                    \left(
                        g_{l,i}^TE_{ll}g_{l,i}
                        -
                        v_{l,i}g_{l,i}^Tg_{l,i}
                    \right)
                    +
                    \sum_{k\neq l}
                    \sqrt{\theta_{l,i}\theta_{k,i}}
                    g_{l,i}^TE_{lk}g_{k,i}
            \right\}
        \right]
    }_{\Xi_\mathrm{lb}^{M(N)}}
    \nonumber,
\end{align}
where $C_\mathrm{d}>0$ is a finite value characterizing the strength of the direct terms, and $C_\mathrm{c}>0$ is a finite value characterizing the strength of the coupled terms.
The influence of the coupling effect grows with $N^{1-\kappa+2\rho}$ and asymptotically vanishes upon taking the logarithm and normalizing with $N M(N)$, if $\rho<\kappa<1/2$.
Let $g = [g_{1,1}^T,\ldots g_{L,1}^T,\ldots,g_{L,M(N)}^T]^T \in \mathbb R^{LM(N)N\times 1}$ be a Gaussian vector of zero mean and identity covariance matrix. 
Then, with $g_i = [g_{1,i}^T,\ldots, g_{L,i}^T]^T$, we can write
\begin{align}
    \Xi_\mathrm{lb}^{M(N)}
    =~&
    \int 
        \mathfrak 1_{\mathcal B_N(\kappa)}
        \prod_{i=1}^{M(N)}
            \left|\Lambda_i\right|^{-\frac{1}{2}}
            \mathrm{d}\mu\left(g_i; 0, \Lambda_i^{-1}\right)      
    =
    P_{N}\left(\mathcal B_N(\kappa)\right)
    \prod_{i=1}^{M(N)}
        \left|\Lambda_i\right|^{-\frac{1}{2}}
    \label{eq:rankM_distribution_PN},
\end{align}
with $\Lambda_{i}$ defined analogously to $\Lambda$ in \eqref{eq:lambda_definition_rank_1}. 
For brevity, we define the two RVs $X_{l,i} = g_{l,i}^Tg_{l,i}/N-1$ and $Y_{l,ij}=g_{l,i}^Tg_{l,j}/N$.
Chebyshev's inequality is not tight enough to obtain a non-vanishing union bound on the probability of event $\mathcal B_N(\kappa)$ with respect to the probability measure $P_N(\cdot)$.
Instead, we apply sub-exponential tail bounds \cite[Chapter 2]{wainwrightHighDimensionalStatisticsNonAsymptotic2019} to the two types of converse sub-event probabilities:
\begin{enumerate}
    \item $P_N(\left|X_{l,i}\right|\geq N^{-\kappa})$ for any $l,i$.
    \item $P_N(\left|Y_{l,ij}\right|\geq N^{-\kappa})$ for any $l,i\neq j$.
\end{enumerate}
To facilitate the bound computation, $g_{l,i}^Tg_{l,j}/N$ must be of unit mean for $i=j$ and of zero mean otherwise. 
The latter is ensured for $i\neq j$ by \eqref{eq:rankM_distribution_PN}.
For $i=j$, the constraint on the mean is enforced by
\begin{align}
    \mathrm{Tr_L}\left[
        \mathbb E\left[
            g_ig_i^T
        \right]
    \right]
    =
    \mathfrak G_{\hat Z_{N,i}}^{\mathcal D_L}(\hat{S}_i)
    =
    \mathbb I_L,
    \label{eq:P2_rankM_OVStilTrans_condition}
\end{align}
which yields the same relation to the R-transform as in \eqref{eq:L_rank1_definition_Shat}.
The following Definition and Proposition are taken from \cite[Chapter 2]{wainwrightHighDimensionalStatisticsNonAsymptotic2019} and restated for the sake of completeness:
\begin{definition}
    An RV $X$ with mean $\mu=\mathbb E[X]$ is sub-exponential if there are non-negative parameters $(\nu, \chi)$ such that
    \begin{equation*}
        \mathbb E\left[
            \exp \left\{
                \gamma (X-\mu)
            \right\}
        \right]
        \leq
        \mathrm{e}^{\frac{\nu^2\gamma^2}{2}} 
        ~\forall |\gamma|<\frac{1}{\chi}
    \end{equation*}
    \label{def:sub-exponentialRV}
\end{definition}
If $X$ is sub-exponential in the sense of Definition~\ref{def:sub-exponentialRV}, then an exponential tail bound can be applied.
\begin{proposition}
    Suppose that $X$ is sub-exponential with parameters $(\nu,\chi)$. Then
    \begin{equation*}
        \mathrm{Pr}\left\{
            \left|X-\mu\right| \geq t
        \right\}
        \leq
        \begin{cases}
            2\exp\left\{
                -\frac{t^2}{2\nu^2}
            \right\}
            & \text{if $0\leq t \leq \frac{\nu^2}{\chi}$}
            \vspace{2pt}
            \\
            2\exp\left\{
                -\frac{t}{2\chi}
            \right\}
            & \text{if $t>\frac{\nu^2}{\chi}$}
        \end{cases}
    \end{equation*}
    \label{proposition:sub-exponential_tail_bounds}
\end{proposition}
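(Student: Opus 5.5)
The proof is a Chernoff argument: a two-sided exponential Markov inequality, followed by optimization over the free parameter $\gamma$ in the two regimes allowed by Definition~\ref{def:sub-exponentialRV}. By symmetry it suffices to bound the upper tail $\mathrm{Pr}\{X-\mu\geq t\}$ and then apply the same bound to $-X$ and take a union bound, which produces the factor $2$. The variable $-X$ is sub-exponential with the same parameters $(\nu,\chi)$, because the moment-generating-function condition is imposed for all $|\gamma|<1/\chi$, symmetric in the sign of $\gamma$.

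For the upper tail, fix any $\gamma\in[0,1/\chi)$. Markov's inequality applied to $\exp\{\gamma(X-\mu)\}$ together with the defining bound gives
\begin{align*}
\mathrm{Pr}\{X-\mu\geq t\}\leq \mathrm{e}^{-\gamma t}\,\mathbb E\left[\mathrm{e}^{\gamma(X-\mu)}\right]\leq \exp\left\{-\gamma t+\frac{\nu^2\gamma^2}{2}\right\}.
\end{align*}
The exponent $h(\gamma)=-\gamma t+\nu^2\gamma^2/2$ is a convex quadratic in $\gamma$ with unconstrained minimizer $\gamma^\ast=t/\nu^2$.

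There are two cases, according to whether $\gamma^\ast$ lies in the admissible interval.

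\emph{Case $0\leq t\leq \nu^2/\chi$.} Here $\gamma^\ast\leq 1/\chi$, so we take $\gamma=\gamma^\ast$ and obtain $h(\gamma^\ast)=-t^2/(2\nu^2)$. The boundary case $\gamma^\ast=1/\chi$ is handled by taking $\gamma\uparrow 1/\chi$ and using continuity of the bound in $\gamma$.

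\emph{Case $t>\nu^2/\chi$.} Now $h$ is decreasing on $[0,1/\chi)$, so we let $\gamma\uparrow 1/\chi$. The exponent tends to
\begin{align*}
-\frac{t}{\chi}+\frac{\nu^2}{2\chi^2}.
\end{align*}
Since $\nu^2/\chi<t$, we have $\nu^2/(2\chi^2)<t/(2\chi)$, so this exponent is at most $-t/(2\chi)$.

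Doubling each one-sided bound yields the two stated branches. There is no real obstacle. The only points needing care are the open endpoint $\gamma<1/\chi$, which is handled by the limiting argument (or by noting that a strict-inequality bound transfers to the limit), and the symmetry step for the lower tail.
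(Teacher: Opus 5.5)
Your proof is correct and follows essentially the same route as the source the paper relies on: the paper does not reprove this proposition but quotes it from Wainwright (Chapter 2), where it is proved by the same Chernoff argument. That argument optimizes $-\gamma t+\nu^2\gamma^2/2$ over $[0,1/\chi)$, takes the interior minimizer $t/\nu^2$ when $t\le\nu^2/\chi$ and the boundary $\gamma\uparrow 1/\chi$ otherwise, and gets the factor $2$ from the symmetric lower tail; your limiting argument at the open endpoint $\gamma=1/\chi$ is extra care that the textbook version glosses over.
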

For brevity we introduce the shorthand notation
\begin{equation}
    C_{lk,i}
    =
    \left[\Lambda_i^{-1}\right]_{lk}
    \in \mathbb R^{N\times N}
    ,
\end{equation}
with eigenvalues $\psi_{li,n}~\forall n\in[N]$ if $l=k$.
The moment generating function for $X_{l,i}$ is computed by Gaussian integration, as
\begin{align}
    \mathbb E\left[
        \exp\left\{
            \gamma X_{l,i}
        \right\}
    \right]
    =
    \exp\left\{
        -\gamma 
        -\frac{1}{2}\log \left|
            \mathbb I -\frac{2\gamma}{N}C_{ll,i}
        \right|
    \right\}
    .
    \label{eq:MGD_X_li}
\end{align}
In order to find the corresponding $(\nu,\chi)$ parameters to $X_{l,i}$ we bound \eqref{eq:MGD_X_li} by some $\exp\{\nu^2\gamma^2/2\}$ for all $|\gamma|<1/\chi$.
This is equivalent to the condition
\begin{align}
    f_{X_{l,i}}(\gamma) = \frac{\nu^2\gamma^2}{2} + \gamma + \frac{1}{2}\sum_{n=1}^N\log \left(
            1 -\frac{2\gamma}{N}\psi_{li,n}
        \right)
        \geq 0, & \text{ for all } {|\gamma|<\frac{1}{\chi}}.
\end{align}
Note that $f_{X_{l,i}}(0)=0$, such that we can check for a negative (positive) derivative for $\gamma<0$ ($\gamma>0$).
The derivative is given as
\begin{align}
    f_{X_{l,i}}'(\gamma) =  
    \frac{1}{N}\sum_{n=1}^N
        \frac{
            \left(
                \frac{2\nu^2\gamma}{N}\psi_{li,n}
                -
                \nu^2
                +
                \frac{2}{N}\psi_{li,n}^2
            \right)
            \gamma
        }{
            \frac{2\gamma}{N}\psi_{li,n}-1
        },
    \label{eq:derivfFuncX}
\end{align}
with $1=\frac{1}{N}\sum_{n=1}^N \psi_{li,n}$ by \eqref{eq:P2_rankM_OVStilTrans_condition}.
Note that $\Lambda_i^{-1}\succ 0$ and consider $\gamma<0$, i.e. all denominator in \eqref{eq:derivfFuncX} are negative.
To obtain a negative derivative for all $\gamma<0$, we must have $2\psi_{li,n}^2/N-\nu^2\leq 0$.
This is enforced for all $n$ by 
\begin{equation}
    \nu^2 \geq \frac{2}{N}\left|\left|
        C_{ll,i}
    \right|\right|_\infty^2
    =\nu^2_\mathrm{min}.
\end{equation}
In the case of $\gamma>0$, all denominators are negative when 
\begin{equation}
\gamma\leq\frac{N}{2\left|\left|
        C_{ll,i}
    \right|\right|_\infty}.
\end{equation}
Therefore, the numerator must be negative as well, i.e. 
\begin{align}
    \gamma
    \leq
        \frac{N}{2 \psi_{li,n}} - \frac{\psi_{li,n}}{\nu^2},
\end{align}
which is guaranteed by 
\begin{align}
    \gamma
    \leq
    \frac{N}{4 \left|\left|
        C_{ll,i}
    \right|\right|_\infty},
\end{align}
with $\nu^2\geq 2 \nu_\mathrm{min}^2$.
We conclude that $X_{l,i}$ is sub-exponential with parameters
\begin{equation}
    \left(
        \frac{4}{N}
        \left|\left|
            C_{ll,i}
        \right|\right|_\infty^2
        ,
        \frac{
            4
            \left|\left|
                C_{ll,i}
            \right|\right|_\infty
        }{N}
    \right).
\end{equation}
The moment generating function corresponding to the sub-event probability of $Y_{l,ij}$ is bounded by
\begin{align}
     \mathbb E\left[
        \exp\left\{
            \gamma Y_{l,ij}
        \right\}
    \right]
    =~&
    \exp\left\{
        -\frac{1}{2}\log \left|
            \mathbb I -\frac{\gamma^2}{N^2}C_{ll,i}C_{ll,j}
        \right|
    \right\}
    \label{eq:mgf_Y_uncorrelated}
    \leq
    \exp\left\{
        -\frac{1}{2}N\log \left(
            1 
            -\frac{\gamma^2}{N^2}
            \left|\left|C_{ll,i}\right|\right|_\infty 
            \left|\left|C_{ll,j}\right|\right|_\infty
        \right)
    \right\},
\end{align}
for
\begin{align}
    \gamma^2 <  \frac{N^2}{||C_{ll,i}||_\infty ||C_{ll,j||_\infty}} = \gamma_\mathrm{max}^2,
\end{align}
i.e. the argument of the log-determinant is positive definite.
Considering the logarithmic inequality
\begin{equation}
    -\log(1-x)\leq 2x ~\forall~ 0\leq x \leq \frac{1}{2},
\end{equation}
it is straightforward to see, that $Y_{l,ij}$ is sub-exponential with parameters
\begin{equation}
    \left(
        \frac{2}{N}
        \left|\left|C_{ll,i}\right|\right|_\infty 
        \left|\left|C_{ll,j}\right|\right|_\infty
        ,
        \frac{\sqrt{
            2
            \left|\left|C_{ll,i}\right|\right|_\infty 
            \left|\left|C_{ll,j}\right|\right|_\infty 
        }}{N}
    \right).
\end{equation}
The tail bounds of Proposition~\ref{proposition:sub-exponential_tail_bounds} can hence be applied for both types of converse sub-event probabilities.
Furthermore, with $t=N^{-\kappa}$ we have $0\leq t \leq \nu^2/\chi$ for sufficiently large $N$. 
Bound \eqref{eq:P2_rank1_eta_bound} is still valid in the case of $M(N)>1$ such that $||C_{ll,i}||_\infty\leq (2\eta \sup_{l,i} \theta_{l,i})^{-1}$ for some $\eta>0$. 
Applying the union bound to the event probability of the converse $\mathcal B_N^c(\kappa)$ yields
\begin{align}
    &P_N(\mathcal B_N(\kappa)) 
    \geq 
    1
    - 
    2LM(N)\exp\left\{
        -\frac{1}{2}N^{1-2\kappa}\eta^2 \sup_{l,i} \theta_{l,i}^2
    \right\}
    - 
    2L{M(N) \choose 2} \exp\left\{
        -\frac{1}{4}N^{1-2\kappa}\eta^2 \sup_{l,i} \theta_{l,i}^2
    \right\}.
\end{align}
For $\kappa<1/2$, the exponentials vanish faster than the scaling terms grow, such that we have $P_N(\mathcal B_N(\kappa))\geq 1/2$ for sufficiently large $N$.
The lower bound on the $L$-fold coupled spherical integral of rank $M(N)=O\left(N^{1/2-\epsilon}\right)$ for any $\epsilon>0$ is hence given by
\begin{align}
    I_N^{(1)}(\{D_l\},\{E_{lk}\})
    \geq~&
    \frac{1}{2}
    \exp\left\{
        - C_\mathrm {d} N^{1-\kappa+2\rho} \sup_{l,i} |\theta_{l,i}|\left(||E_{ll}||_\infty+|v_{l,i}|\right)
    \right\}
    \label{eq:lower_bound_rankMN}
    \exp\left\{
        - C_\mathrm c N^{1-\kappa +2\rho} \sup_{i,l,k}\sqrt{\theta_{l,i}\theta_{k,i}} ||E_{lk}||_\infty
    \right\}
    \\
    &\times
    \prod_{i=1}^{\zeta N^\rho}
        \left|\Lambda_i\right|^{-\frac{1}{2}}
        \prod_{l=1}^L
            \exp\left\{
                N \theta_{l,i} v_{l,i}
            \right\}
    \nonumber.
\end{align}
As in the proof of Theorem \ref{thm:pL_rank1_integral}, the upper bound is obtained similarly, by inverting the signs of the error terms in the exponent and $P_N(\mathcal B_N(\kappa))\leq 1$.
Hence, by applying the normalized logarithm to the coupled integral as well as the corresponding bounds we obtain
\begin{align}
    &
    \frac{1}{\zeta N^{1+\rho}}\log\left(\frac{1}{2}\right)
    - C_\mathrm {d}\frac{N^{1-\kappa+2\rho}}{\zeta N^{1+\rho}} \sup_{l,i} |\theta_{l,i}|\left(||E_{ll}||_\infty+|v_{l,i}|\right)
    - C_\mathrm c \frac{N^{1-\kappa +2\rho}}{\zeta N^{1+\rho}} \sup_{i,l,k}\sqrt{\theta_{l,i}\theta_{k,i}} ||E_{lk}||_\infty
    \\
    \nonumber
    &\leq \frac{1}{NM(N)} \log I_N^{(1)}(\{D_l\},\{E_{lk}\})
    -
    \frac{1}{M(N)}
    \sum_{i=1}^{M(N)}
        \theta_{l,i}v_{l,i}
        - \frac{1}{2N}\log \left|\Lambda_i\right|
    \\
    \nonumber
    &\leq
    \frac{1}{NM(N)}\log\left(1+\epsilon(N,\kappa)\right)
    + C_\mathrm {d}\frac{N^{1-\kappa+2\rho}}{\zeta N^{1+\rho}} \sup_{l,i} |\theta_{l,i}|\left(||E_{ll}||_\infty+|v_{l,i}|\right)
    + C_\mathrm c \frac{N^{1-\kappa +2\rho}}{\zeta N^{1+\rho}} \sup_{i,l,k}\sqrt{\theta_{l,i}\theta_{k,i}} ||E_{lk}||_\infty.
\end{align}
If $\rho<\kappa<1/2$, the upper and lower bounds tend to zero as $N\uparrow\infty$.
Assuming the joint empirical spectral measure $\hat \mu_{N}(\theta_{1},\ldots,\theta_{L})$ converges to the joint measure $\mu(\theta_1,\ldots,\theta_{L})$ as $N\uparrow\infty$ we write
\begin{align}
    \lim_{N\uparrow\infty}
    \frac{1}{NM(N)} \log I_N^{(1)}(\{D_l\},\{E_{lk}\})
    =
    \int
    g(\theta_{1},\ldots, \theta_{L})
    \mathrm d \mu(\theta_{1},\ldots,\theta_{L}),
\end{align}
with $g:\mathbb R^L\to\mathbb R$ as defined in \eqref{eq:g_func_definition_rank_1}.
The proof is extended to $\beta=2$ by the same procedure as described in the proof of Theorem \ref{thm:pL_rank1_integral}.
\end{proof}

\begin{proof}[Proof of Theorem~\upshape{\ref{thm:pL_rankM_integral_modification}}]
    Replace $E_{lk}$ by $E_{lk,i}$ in \eqref{eq:integral_P2_rankM}.
    Then, all derivations up to \eqref{eq:P2_rankM_coupled_HCIZ_lower_bound} still hold, where $\sup_l$ must be replaced by $\sup_{l,i}$.
    Note that in $\Lambda_i^{-1}$, not only the $\theta_{l,i}$ are dependent on index $i$, but so do the matrices $E_{lk,i}$.
    We, hence, observe that matrix $Z_{N,i}$ is also dependent on index $i$, such that we obtain
    \begin{align}
        \mathrm{Tr}_L\left[
            g_ig_i^T
        \right]
        =
        \mathrm{Tr}_L\left[
            \left(
                -P_i^{\frac 1 2}
                Z_{N,i} 
                P_i^{\frac 1 2}
                - S_i
                \otimes \mathbb I_N
            \right)^{-1}
        \right]
        = 
        -
        P_i^{-\frac 1 2}
        \mathfrak G_{Z_{N,i}}^{\mathcal D_L}(-S_i)
        P_i^{-\frac 1 2}
        =\mathbb I_L,
    \end{align}
    with $Z_{N,i}\in\mathbb A^{LN\times LN}$ and $\left[Z_{N,i}\right]_{lk}=E_{lk,i}$.
    All subsequent bounds of the Proof of Theorem~\ref{thm:pL_rankM_integral} still hold, such that we obtain
    \begin{align}
        \lim_{N\uparrow\infty} 
        \frac 1 {NM(N)} 
        \log J_N^{(1)}\left(
            \{D_l\}, \{E_{lk,i}\}
        \right)
        =
        \frac{1}{M(N)}
        \sum_{i=1}^{M(N)}
            g_i\left(\theta_{1,i},\ldots, \theta_{L,i}\right).
    \end{align}
    In this case however, function $g$ is conditioned on index $i$ as $\Lambda_i$ is not only index dependent through the singular values $\theta_{l,i}$ but also $Z_{N,i}$.
    Reformulation of the function through derivation and subsequent integration are performed as for Theorem \ref{thm:pL_rank1_integral}.
    This concludes the proof for $\beta=1$. 
    For $\beta=2$, the same steps as in the Proof of Theorem \ref{thm:pL_rank1_integral} are followed.
\end{proof}

%%%%%%%%%%%%%%%%%%%%%%%%%%%%%%%%%%%%%%%%%%%%%%
%% Single Appendix:                         %%
%%%%%%%%%%%%%%%%%%%%%%%%%%%%%%%%%%%%%%%%%%%%%%
%\begin{appendix}
%\section*{???}%% if no title is needed, leave empty \section*{}.
%\end{appendix}
%%%%%%%%%%%%%%%%%%%%%%%%%%%%%%%%%%%%%%%%%%%%%%
%% Multiple Appendixes:                     %%
%%%%%%%%%%%%%%%%%%%%%%%%%%%%%%%%%%%%%%%%%%%%%%
%\begin{appendix}
%\section{???}
%
%\section{???}
%
%\end{appendix}

%%%%%%%%%%%%%%%%%%%%%%%%%%%%%%%%%%%%%%%%%%%%%%
%% Support information, if any,             %%
%% should be provided in the                %%
%% Acknowledgements section.                %%
%%%%%%%%%%%%%%%%%%%%%%%%%%%%%%%%%%%%%%%%%%%%%%
\begin{acks}[Acknowledgments]
The authors would like to thank Roland Speicher for discussions on the operator-valued R-transform and enabling a visit to the Department of Mathematics at Saarland University.
Furthermore, we thank Hermann Schulz-Baldes for editorial help and guidance on the publication process in the mathematical world, and Johanna S. Fröhlich for proofreading the manuscript.
\end{acks}
%%%%%%%%%%%%%%%%%%%%%%%%%%%%%%%%%%%%%%%%%%%%%%
%% Funding information, if any,             %%
%% should be provided in the                %%
%% funding section.                         %%
%%%%%%%%%%%%%%%%%%%%%%%%%%%%%%%%%%%%%%%%%%%%%%
\begin{funding}
The authors were supported by Deutsche Forschungsgemeinschaft (DFG) under the project Computation Coding (MU-3735/8-1).
\end{funding}

%%%%%%%%%%%%%%%%%%%%%%%%%%%%%%%%%%%%%%%%%%%%%%%%%%%%%%%%%%%%%
%%                  The Bibliography                       %%
%%                                                         %%
%%  imsart-number.bst  will be used to                     %%
%%  create a .BBL file for submission.                     %%
%%                                                         %%
%%  Note that the displayed Bibliography will not          %%
%%  necessarily be rendered by Latex exactly as specified  %%
%%  in the online Instructions for Authors.                %%
%%                                                         %%
%%  MR numbers will be added by VTeX.                      %%
%%                                                         %%
%%  Use \cite{...} to cite references in text.             %%
%%                                                         %%
%%%%%%%%%%%%%%%%%%%%%%%%%%%%%%%%%%%%%%%%%%%%%%%%%%%%%%%%%%%%%

%% if your bibliography is in bibtex format, uncomment commands:
\bibliographystyle{imsart-number} % Style BST file
\bibliography{zoteroBetterBibTeX}       % Bibliography file (usually '*.bib')

%% or include bibliography directly:
% \begin{thebibliography}{}
% \bibitem{b1}
% \end{thebibliography}

\end{document}